\documentclass[a4paper,reqno,11pt]{amsart}
\usepackage[T1]{fontenc}
\usepackage{lmodern}
\usepackage{amsmath,amssymb,amsthm,mathtools}
\usepackage[margin=30mm]{geometry}
\usepackage[hidelinks]{hyperref}
\hypersetup{pdftitle={Graded automorphisms of multiparameter quantum matrix algebras},pdfauthor={Hai Jin}}
\numberwithin{equation}{section}
\newtheorem{theorem}{Theorem}[section]
\newtheorem{proposition}[theorem]{Proposition}
\newtheorem{lemma}[theorem]{Lemma}
\newtheorem{corollary}[theorem]{Corollary}
\theoremstyle{definition}

\newtheorem*{definition*}{Definition}
\newtheorem{remark}[theorem]{Remark}
\newtheorem{example}[theorem]{Example}

\newcommand{\kk}{\Bbbk}
\newcommand{\Aut}{\operatorname{Aut}}
\newcommand{\Autgr}{\operatorname{Aut}_{\mathrm{gr}}}
\newcommand{\GL}{\operatorname{GL}}
\newcommand{\HH}{\mathcal H}
\newcommand{\Ccorner}{\mathfrak C}
\newcommand{\Mat}[2]{\mathcal O_{#1,#2}(M_n(\kk))}

\title[Graded automorphisms of quantum matrices]{Graded automorphisms of multiparameter quantum matrix algebras}
\author{Hai Jin}
\address{School of Mathematical Sciences, Dalian University of Technology, Dalian 116081, China}
\email{haijin@dlut.edu.cn}
\subjclass[2020]{Primary 16W20; Secondary 16T20, 16W50, 16S36}
\keywords{Multiparameter quantum matrix algebra, graded automorphism group, automorphism group, rigidity}
\thanks{This work was supported by the National Natural Science Foundation of China (Grant No. 12601034) and the Fundamental Research Funds for the Central Universities (Grant No. DUT25RC(3)091).}
\date{}

\begin{document}

\begin{abstract}
      We determine the graded automorphism group of the multiparameter quantum matrix algebra $\mathcal O_{\lambda,\mathbf p}(M_n(\kk))$ over an arbitrary field $\kk$ for all $n\ge2$ and $\lambda\ne1$. 
      No other restrictions are assumed on $\mathbf p$ or $\lambda$.
      Every graded automorphism is a torus automorphism, possibly followed by transposition or by simultaneous reversal of the row and column indices. We give the precise parameter conditions for these two involutions. 
      As an application, we also determine the full automorphism group when the multiplicative group generated by the parameters is torsion-free.
\end{abstract}

\maketitle

\section{Introduction}\label{sec:introduction}

Let $\kk$ be an arbitrary field, fix $n\ge2$, and let $\mathbf p=(p_{ij})\in M_n(\kk^*)$ be multiplicatively antisymmetric:
\[
      p_{ii}=1,\quad p_{ij}p_{ji}=1.
\]
For $\lambda\in\kk^*$, the multiparameter quantum matrix algebra $A=\Mat{\lambda}{\mathbf p}$ is the $\kk$-algebra generated by $x_{ij}$, $1\le i,j\le n$, with defining relations \cite[Section~I.2.2, p.~16]{BG2002}
\begin{equation}\label{eq:relations}
	x_{lm}x_{ij}=
	\begin{cases}
		p_{li}p_{jm}x_{ij}x_{lm}+(\lambda-1)p_{li}x_{im}x_{lj},&l>i,\ m>j,\\
		\lambda p_{li}p_{jm}x_{ij}x_{lm},&l>i,\ m\le j,\\
		p_{jm}x_{ij}x_{lm},&l=i,\ m>j.
	\end{cases}
\end{equation}
Each generator has degree one, giving $A$ its standard $\mathbb N_0$-grading. We write $\Aut(A)$ for its automorphism group and $\Autgr(A)$ for the subgroup of automorphisms preserving every homogeneous component.
We assume $\lambda\ne1$ except in Remark~\ref{rem:affine}, which treats quantum affine space.

For $\alpha,\beta\in(\kk^*)^n$, the assignments
\[
      h_{\alpha,\beta}(x_{ij})=\alpha_i\beta_jx_{ij}
\]
define automorphisms of $A$, called torus automorphisms. 
They form a subgroup $\HH\cong(\kk^*)^{2n-1}$ of $\Autgr(A)$; see \cite[Section~2.1]{LL2013} and \cite[Example~6.10]{GY2018}.

For $q\in\kk^*$, the single-parameter quantum matrix algebra $\mathcal O_q(M_n(\kk))$ is recovered by setting $\lambda=q^{-2}$ and $p_{ij}=q$ for all $i>j$. In this case, matrix transposition $\tau(x_{ij})=x_{ji}$ is also a graded automorphism.
When $q$ is not a root of unity, Alev and Chamarie proved that the automorphism group is exactly $\HH\rtimes\langle\tau\rangle$ for $n=2$ \cite{AC1992}.
Launois and Lenagan conjectured that the same description holds for all $n\ge 2$ \cite{LL2007}, and proved it for $n=3$ \cite{LL2013}.
Using the rigidity of quantum tori \cite{Yakimov2014}, Yakimov resolved this conjecture \cite{Yakimov2013}.
Consequently, when $q$ is not a root of unity, one has
\[
      \Aut(\mathcal O_q(M_n(\kk))) = \Autgr (\mathcal O_q(M_n(\kk)))= \HH\rtimes\langle\tau\rangle, \quad n\ge2.
\]
When $q$ is a root of unity, the full automorphism group can be larger than the graded automorphism group. 
Gaddis and Lamkin constructed nongraded automorphisms of $\mathcal O_q(M_n(\kk))$ in this case \cite{GL2024}.

In the multiparameter case, $\tau$ is an automorphism only when the parameters satisfy certain compatibility conditions; see Proposition~\ref{prop:symmetries}.
Gaddis and Lamkin studied centers and automorphisms of PI quantum matrix algebras using noncommutative discriminants \cite{GL2024}. Over an algebraically closed field of characteristic zero, they proved that for $n=2$ and $\lambda^2\ne1$, every automorphism of a PI quantum matrix algebra is graded whenever its center is a polynomial ring.

In this paper, we classify the graded automorphisms of multiparameter quantum matrix algebras $\Mat{\lambda}{\mathbf p}$ over an arbitrary field for all $n\ge2$ and $\lambda\ne1$, without any further restrictions on the parameters.

Throughout the paper, we write
\[
    \bar{i}=n+1-i, \quad 1\le i\le n.
\]
Consider the assignment
\[
    \sigma(x_{ij})=x_{\bar{i},\bar{j}},\quad 1\le i,j\le n.
\]
The main result of this paper is the following classification of graded automorphisms.
\begin{theorem}\label{thm:main}
	Suppose $\lambda\ne1$. Then
	\[
		\Autgr(A)=
		\begin{cases}
			\HH\rtimes\langle\tau\rangle,       &\lambda p_{ij}^{2}=1\quad\text{for all }i>j,\\
			\HH\rtimes\langle\sigma\rangle,     &\lambda=-1\ \text{and }p_{ij}=p_{\bar{i},\bar{j}} \quad\text{for all }i>j,\\
			\HH,  &\text{otherwise}.
		\end{cases}
	\]
	The first two cases are mutually exclusive. 
\end{theorem}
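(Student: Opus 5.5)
A graded automorphism $\phi$ is determined by its restriction to $V=A_1=\operatorname{span}\{x_{ij}\}$, which is some $g\in\GL(V)$, and it must preserve the space of quadratic relations $R\subset V\otimes V$ cut out by \eqref{eq:relations}. So the plan is to classify those $g$ that preserve $R$.

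The first step is to locate the ``normal-type'' degree-two structure. The relations in \eqref{eq:relations} split into two kinds. There are the $q$-commutation relations, which cover pairs in the same row or column and pairs with $l>i$, $m\le j$. Then there are the $2\times2$ minor relations for $l>i$, $m>j$, which carry the extra term $(\lambda-1)p_{li}x_{im}x_{lj}$. Since $\lambda\neq1$, for each generator $x_{ij}$ I will consider an intrinsic invariant $N(x_{ij})$: the dimension of the space of $y\in V$ such that $x_{ij}y\in\kk\, yx_{ij}+\dots$ in a suitable sense. More concretely, I would use a rank function on $A_2$ such as the rank of the left multiplication map $V\to A_2/(V x_{ij})$. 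This invariant should distinguish the corner generators $x_{11},x_{nn}$ (which normalize, up to lower terms, everything on one side) and $x_{1n},x_{n1}$ (which $q$-commute with every generator). The elements $x_{1n}$ and $x_{n1}$ are normal. First I would show that the normal elements of degree one span exactly $\kk x_{1n}+\kk x_{n1}$. To do this, write $u=\sum c_{ij}x_{ij}$, impose $ux_{kl}\in x_{kl}V$ using the PBW basis in lexicographic order, and use $\lambda\neq1$ to kill every coefficient except $c_{1n}$ and $c_{n1}$. It then follows that $g$ either fixes or swaps the lines $\kk x_{1n}$ and $\kk x_{n1}$.

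The main obstacle is the next step: propagating this from the corners to all generators. For this I will use the centralizer/commutation filtration. Define $V_{u}=\{y\in V: uy\in\kk\, yu\}$ for $u=x_{1n}$ or $x_{n1}$ (all of $V$), and more usefully the subspaces of $V$ that $q$-commute with a given generator. The generators $q$-commuting with $x_{ij}$ are those in row $i$, in column $j$, and in the ``anti-diagonal'' quadrants ($l>i,m\le j$ or $l<i,m\ge j$). I would argue by induction: by peeling off $x_{1n},x_{n1}$ (passing to the quotient by these normal elements, which gives a quantum-matrix-like algebra of the same type on the remaining generators), identify row $1$ and column $n$ as intrinsic subspaces. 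Then show that $g$ maps each line $\kk x_{ij}$ to some line $\kk x_{\pi(i,j)}$, where $\pi$ preserves the ``same row/same column/NE–SW/NW–SE'' incidence structure. This incidence structure is a poset-like structure on the $n\times n$ grid. Its automorphism group is generated by transpose, the $180^\circ$ rotation $(i,j)\mapsto(\bar i,\bar j)$, and the anti-transpose, because the NW–SE versus NE–SW distinction (minor relations versus pure commutation) must be preserved. The anti-transpose and the other symmetries that swap the two diagonal directions are excluded since they would exchange $x_{1n}$ with $x_{11}$-type generators, contradicting the normality result above. Hence $\pi\in\{\mathrm{id},\tau,\sigma,\tau\sigma\}$, and $g=h\circ\pi$ with $h$ diagonal. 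Diagonal $g$ preserving $R$ are exactly $\HH$: the minor relations force $\gamma_{ij}\gamma_{lm}=\gamma_{im}\gamma_{lj}$, which gives $\gamma_{ij}=\alpha_i\beta_j$.

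Finally, I will check when $\tau$, $\sigma$ and $\tau\sigma$ (up to torus factors) preserve $R$; presumably this is what Proposition~\ref{prop:symmetries} records. Applying $\tau$ to the relation with $l>i$, $m\le j$ and comparing with the relation for the transposed pair yields $\lambda p_{li}p_{jm}=(\lambda p_{ml}p_{ij})^{-1}$-type equalities, which reduce to $\lambda p_{ij}^2=1$. Applying $\sigma$ reverses the order of the products in the minor relations. Rewriting requires $\lambda^{-1}=\lambda$, and since $\lambda\neq1$ this forces $\lambda=-1$; the remaining relations then give $p_{ij}=p_{\bar i\bar j}$. For $\tau\sigma$, I would show that it requires both sets of conditions, so it lies in the group only when both $\tau$ and $\sigma$ do. The two cases are mutually exclusive: together they give $\lambda=-1$ and $p_{ij}^2=-1$, and I expect these to force a contradiction with the conditions on $\tau\sigma$. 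Since $\tau$ and $\sigma$ normalize $\HH$, we get the stated semidirect products.
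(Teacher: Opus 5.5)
There is a genuine gap exactly where you locate the main obstacle: you never prove that a graded automorphism $g$ is monomial, i.e.\ that it sends each line $\kk x_{ij}$ onto some line $\kk x_{\pi(i,j)}$. Everything downstream (determining $\pi$, Lemma~\ref{lem:diagonal}, the conditions for $\tau$ and $\sigma$) is routine by comparison. This step is the substance of the theorem, and none of the tools you sketch delivers it.

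\begin{itemize}
\item A rank invariant evaluated on the generators says nothing about $g(x_{ij})$. It would help only if you computed it for \emph{every} $u\in A_1$ and showed that it singles out the union of the lines $\kk x_{ij}$. You do not do this, and it is not clear it holds for all parameters.
\item The ``peeling off'' induction is not available. $A/\langle x_{1n},x_{n1}\rangle$ is not a quantum matrix algebra: its degree-one part has dimension $n^2-2$, which is never a square, and $x_{11}$, $x_{nn}$ commute in it.
\item The whole normal-element filtration of Theorem~\ref{thm:normal-filtration} only tells you that $g$ preserves the flag $\operatorname{span}\{x_{ij}:|i-j|\ge d\}$, i.e.\ the support bound \eqref{eq:support-bound}. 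This leaves $g(x_{11})$ unconstrained and allows $g(x_{12})$ to be any combination of off-diagonal generators.
\end{itemize}

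Even your first step has a hole. That the degree-one normal elements span $\kk x_{1n}\oplus\kk x_{n1}$ does not imply that $g$ fixes or swaps the two lines: for $n=2$ and $\lambda p^2=1$, every element of $\kk x_{12}\oplus\kk x_{21}$ is normal. You need an extra argument. For instance, $x_{1n}^2$ and $x_{n1}^2$ never occur in $g(x_{nn})g(x_{11})-g(x_{11})g(x_{nn})$, so $g(x_{1n})g(x_{n1})$ has no such terms, which gives $ac=bd=0$ for the coefficients. This is what Lemma~\ref{lem:filtration}(3) supplies.

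The paper obtains monomiality without any intrinsic characterization of the lines.
\begin{itemize}
\item By Lemma~\ref{lem:corner-projection}, the corner projection $\pi\circ\phi|_{\Ccorner}$ is a graded automorphism of the $2\times2$ quantum matrix algebra $\Ccorner$. Proposition~\ref{prop:two}, proved by direct coefficient comparison, then fixes the corner images modulo noncorner terms.
\item The map is then composed with $\Phi_\tau$ or $\Phi_\sigma$, which changes the parameters; this is why one works with isomorphisms $A\to B$. Lemma~\ref{lem:corner-lift} removes the noncorner terms from the images of $x_{11}$ and $x_{nn}$ using the minor relation between them.
\item Lemma~\ref{lem:corner-rigidity} forces each edge generator onto its own line, via the paired equation $\delta a_{1t}=\beta b_{nt}$ and the support bound. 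It then forces each interior generator onto its own line via the relation between $x_{nj}$ and $x_{i1}$.
\item The resulting equality $\mathbf q=\mathbf p$ is exactly the parameter condition for $\tau$ or $\sigma$.
\end{itemize}

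If you pursue your route, the combinatorial step also needs repair. The $q$-commutation relations alone do not separate same-row/column pairs from NE--SW pairs. In the graph of non-$q$-commuting pairs, $x_{1n}$ and $x_{n1}$ are isolated, so that graph has automorphisms outside $\{\mathrm{id},\tau,\sigma,\tau\sigma\}$. You must use that the minor relation for $(i,j),(l,m)$ has right-hand side $x_{im}x_{lj}$, so that $\pi$ preserves rectangles.

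Also, the anti-transpose is $\tau\sigma$ itself, and it does not swap the diagonal directions. It is never an automorphism: applied to $x_{1n}x_{11}=p_{1n}x_{11}x_{1n}$ it yields $x_{1n}x_{nn}=p_{1n}x_{nn}x_{1n}$, while $x_{1n}x_{nn}=\lambda^{-1}p_{1n}x_{nn}x_{1n}$ in $A$. Mutual exclusivity is immediate: $\sigma$ forces $p_{n1}=p_{1n}$, so $p_{n1}^2=1$, and then $\lambda p_{n1}^2=1$ gives $\lambda=1$.
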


Our proof builds on the normal-element filtration used by Gaddis \cite{Gaddis2017} to solve the isomorphism problem; see Section~\ref{sec:preliminaries}.
We first classify the graded automorphisms when $n=2$ in Section~\ref{sec:two}.
For $n\ge3$, we divide the generators into corner, edge, and interior generators, and use the $2\times 2$ classification to determine the projections of the four corner images in Section~\ref{sec:corners}. 
In Section~\ref{sec:classification}, we determine the generator images from corners to edges and then to the interior, using relations involving opposite edges and the normal-element filtration.
The same arguments also give an isomorphism criterion for multiparameter quantum matrix algebras with deformation parameter $-1$; see Corollary~\ref{cor:minus-one-isomorphisms}.

In Section~\ref{sec:full}, we combine Theorem~\ref{thm:main} with the rigidity theorem of Goodearl and Yakimov \cite[Theorem~4.3]{GY2015} to determine the full automorphism group under a torsion-free hypothesis. More precisely, if the subgroup of $\kk^*$ generated by $\lambda$ and the $p_{ij}$ is torsion-free, then
\[
    \Aut(A)=\Autgr(A)=
    \begin{cases}
        \HH\rtimes\langle\tau\rangle,     &\lambda p_{ij}^{2}=1 \quad\text{for all }i>j,\\
        \HH,                              &\text{otherwise};
    \end{cases}
\]
see Corollary~\ref{cor:full}.
Specializations and examples are given in Section~\ref{sec:specializations}.

Throughout, all algebras and maps are over the base field $\kk$. 
For the standard grading $A=\bigoplus_{i\in\mathbb N_0}A_i$, we write $A_{\ge d}=\bigoplus_{i\ge d}A_i$ for $d\in\mathbb N_0$.

\section{Preliminaries}\label{sec:preliminaries}

\subsection{Standard basis and torus automorphisms}\label{subsec:torus}
By \cite[Theorem~2, p.~881]{AST1991}, $A$ is a domain and the ordered monomials
\[
	x_{11}^{r_{11}}x_{12}^{r_{12}}\cdots x_{nn}^{r_{nn}},	\quad r_{ij}\in\mathbb N_{0},
\]
form a $\kk$-basis of $A$, which we call the standard basis.

An automorphism of $A$ is called diagonal if it sends each standard generator to a nonzero scalar multiple of itself. 
\begin{lemma}\label{lem:diagonal}
	Suppose $\lambda\ne1$. The diagonal automorphisms of $A$ are precisely the torus automorphisms.
\end{lemma}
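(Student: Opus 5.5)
The plan is to show that a diagonal automorphism $\phi(x_{ij})=c_{ij}x_{ij}$, $c_{ij}\in\kk^*$, must satisfy $c_{ij}c_{lm}=c_{im}c_{lj}$ for all $i,l,j,m$, and then to factor such a matrix $(c_{ij})$ as $\alpha_i\beta_j$. The converse direction is already known: torus automorphisms are diagonal automorphisms by \cite[Section~2.1]{LL2013}.

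Take $\phi$ diagonal and fix $l>i$, $m>j$. Applying $\phi$ to the first relation in \eqref{eq:relations} gives
\[
c_{lm}c_{ij}\,x_{lm}x_{ij}=c_{ij}c_{lm}\,p_{li}p_{jm}x_{ij}x_{lm}+c_{im}c_{lj}(\lambda-1)p_{li}\,x_{im}x_{lj}.
\]
Subtract $c_{ij}c_{lm}$ times the original relation. This leaves
\[
(c_{im}c_{lj}-c_{ij}c_{lm})(\lambda-1)p_{li}\,x_{im}x_{lj}=0.
\]
The monomial $x_{im}x_{lj}$ is an ordered standard monomial, since $i<l$. It is therefore nonzero in $A$, because $A$ is a domain with the standard basis of \cite{AST1991}. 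Since $\lambda\ne1$ and $p_{li}\ne0$, we get $c_{ij}c_{lm}=c_{im}c_{lj}$ for all $i<l$ and $j<m$. The relation is symmetric under swapping $i,l$ or $j,m$, and it is trivial when $i=l$ or $j=m$, so it holds for all indices.

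To factor the matrix, set $\alpha_i=c_{i1}$ and $\beta_j=c_{1j}/c_{11}$. Taking $l=1$ and $m=1$ in the relation gives $c_{ij}c_{11}=c_{i1}c_{1j}$, so $c_{ij}=\alpha_i\beta_j$. Hence $\phi=h_{\alpha,\beta}$ is a torus automorphism.

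There is no real obstacle here. The only point needing care is justifying that $x_{im}x_{lj}\ne0$, which the domain property (or the standard basis) handles directly.
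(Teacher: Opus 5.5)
Your proof is correct and follows essentially the same approach as the paper: apply $\phi$ to the first relation in \eqref{eq:relations}, use that the standard monomial $x_{im}x_{lj}$ is nonzero to get $c_{ij}c_{lm}=c_{im}c_{lj}$, and normalize by the first row and column. The paper only uses the relations of each $x_{ij}$ with $x_{11}$, and that special case is all your factorization step actually needs.
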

\begin{proof}
	Every torus automorphism is diagonal by definition. Conversely, let $\phi$ be a diagonal automorphism of $A$, say $\phi(x_{ij})=c_{ij}x_{ij}$ with $c_{ij}\in\kk^*$. For $1<i,j\le n$, the defining relations \eqref{eq:relations} give
	\[
		x_{ij}x_{11}=p_{i1}p_{1j}x_{11}x_{ij}+(\lambda-1)p_{i1}x_{1j}x_{i1}.
	\]
	Applying $\phi$ and comparing the resulting equality with the relation above, we obtain
	\[
		(\lambda-1)p_{i1}(c_{11}c_{ij}-c_{1j}c_{i1})x_{1j}x_{i1}=0.
	\]
	Since $(\lambda-1)p_{i1}\ne0$ and $x_{1j}x_{i1}$ is a nonzero basis monomial, it follows that $c_{11}c_{ij}=c_{1j}c_{i1}$. This equality also holds when $i=1$ or $j=1$. Set
	\[
		\alpha_i=\frac{c_{i1}}{c_{11}},\quad \beta_j=c_{1j},\quad 1\le i,j\le n.
	\]
	Then $c_{ij}=\alpha_i\beta_j$ for all $i,j$, so $\phi=h_{\alpha,\beta}\in\HH$.
\end{proof}

\subsection{The normal-element filtration}\label{subsec:normal}

Following Gaddis \cite{Gaddis2017}, we set $I_{-1}=0$ and define ideals $I_0 \subset I_1 \subset \cdots \subset I_{n-1}$ of $A$ such that $I_k/I_{k-1}$ is the ideal generated by all homogeneous degree-one normal elements in $A/I_{k-1}$.
The coefficient comparisons in the proof of \cite[Theorem~3.4]{Gaddis2017} require only $\lambda\ne1$. Thus the following description also holds when $\lambda=-1$.

\begin{theorem}[{\cite[Theorem~3.4]{Gaddis2017}}]\label{thm:normal-filtration}
    For $\lambda\ne1$ and $0\le k<n$,
    \[
        I_k/I_{k-1}=\langle x_{ij}:|i-j|=n-k-1\rangle \quad\text{in }A/I_{k-1}.
    \]
\end{theorem}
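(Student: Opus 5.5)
We argue by induction on $k$, and at each stage we work in the quotient $\bar A_k:=A/I_{k-1}$. The inductive hypothesis is that $I_{k-1}$ is generated by the $x_{ij}$ with $|i-j|\ge n-k$. Set $d=n-k-1$. The claim then splits into two parts: \emph{(a)} every surviving generator $x_{ij}$ with $|i-j|=d$ is normal in $\bar A_k$, and \emph{(b)} every homogeneous degree-one normal element of $\bar A_k$ lies in the span of these generators.

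Both parts need a basis for $\bar A_k$. We first show that $\bar A_k$ has as a basis the ordered standard monomials in the surviving generators $x_{ij}$, $|i-j|\le d$. The natural route is the diamond lemma or a Gröbner-type argument with the ordering used in \cite{AST1991}. The key observation is that the set $S$ of killed generators is closed under the quadratic correction in \eqref{eq:relations}. Take $l>i$, $m>j$ with $x_{ij}$ or $x_{lm}$ in $S$; then one of $x_{im},x_{lj}$ is again in $S$. Indeed, $m-i>\max(j-i,m-l)$ and $l-j>\max(l-m,i-j)$, so the corner entries are at least as far from the diagonal. Hence the two-sided ideal generated by $S$ is spanned by the standard monomials containing a factor from $S$, and the quotient inherits a PBW basis.

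For (a), take $x_{ij}$ with $j-i=d$; the case $i-j=d$ is symmetric. Let $x_{lm}$ be any surviving generator. The only relation in \eqref{eq:relations} that is not a $q$-commutation has correction term $(\lambda-1)p\,x_{im}x_{lj}$, with $l>i$, $m>j$ (or the roles reversed). In that case $m-i>j-i=d$, or the analogous inequality on the other side, so the correction term vanishes in $\bar A_k$. Thus $x_{ij}$ $q$-commutes with every generator, and it is therefore normal.

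For (b), which I expect to be the main obstacle, let $y=\sum c_{ab}x_{ab}$ be normal, and suppose some $c_{ab}\ne0$ with $|a-b|<d$. Then $yx_{uv}\in A_1y$ for every generator $x_{uv}$. I would choose $x_{uv}$ so that, after rewriting $yx_{uv}$ in the PBW basis, a term $(\lambda-1)c_{ab}p\,x_{\ast}x_{\ast}$ appears that cannot appear in any product $zy$ with $z\in A_1$. The first candidate is a generator on the boundary diagonal $|u-v|=d$ lying strictly southeast or northwest of $(a,b)$, for which the corner generators are still surviving. The difficulty is to organize the coefficient comparison so that only $\lambda\ne1$ is used. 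To do this, I would order the support of $y$ (say by the row index of $x_{ab}$, then the column index) and pick the extremal $(a,b)$, so that the offending monomial comes from a unique pair. Then $(\lambda-1)c_{ab}=0$, which is a contradiction. This step requires no condition on $\lambda+1$ or on $\mathbf p$, which is why the description remains valid for $\lambda=-1$.
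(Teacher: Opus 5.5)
The paper gives no independent proof of this statement. It cites \cite[Theorem~3.4]{Gaddis2017} and only remarks that the coefficient comparisons there need nothing beyond $\lambda\ne1$. Two parts of your reconstruction are sound. The closure of the killed set $S$ under the correction term does show that $\langle S\rangle$ is spanned by the standard monomials having a factor in $S$, so $A/I_{k-1}$ has the PBW basis in the surviving generators. Part (a) is also correct.

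Part (b), however, has a genuine gap, and the plan you describe fails as stated. Your ``first candidate'' never exists. Suppose $(u,v)$ is strictly northwest of $(a,b)$. If $v-u=d$, then $b-u>d$; if $u-v=d$, then $a-v>d$. The southeast case is symmetric, so one corner is always killed. This is just your part (a) again: boundary generators $q$-commute with every surviving generator, so $yx_{uv}$ contains no $(\lambda-1)$-term at all.

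With a non-boundary test generator, say $(u,v)=(a-1,b-1)$, the monomial $x_{ub}x_{av}$ does have coefficient $(\lambda-1)p_{au}c_{ab}$ in $yx_{uv}$. But in $zy$ the same monomial also receives
\[
z_{ub}c_{av}+\lambda p_{au}p_{bv}z_{av}c_{ub}+(\lambda-1)p_{au}z_{ab}c_{uv},
\]
and $z$ is unknown. No extremal choice of $(a,b)$ removes these terms in general. For $n=3$ and $d=1$, take a candidate $y$ supported on $x_{22},x_{12},x_{21},x_{23},x_{32}$. Its only non-boundary support point is $(2,2)$. For either diagonal neighbour of $(2,2)$, the cross terms coming from $c_{12},c_{21}$ (respectively $c_{23},c_{32}$) remain uncontrolled. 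So the claim that the offending monomial ``cannot appear in any product $zy$'' is false.

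The missing step is to determine $z$ before comparing coefficients.

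\begin{itemize}
\item Use the row/column grading $\deg x_{ij}=(\varepsilon_i,\varepsilon_j)\in\mathbb Z^n\times\mathbb Z^n$. The relations and $I_{k-1}$ are homogeneous for it, because $x_{im}x_{lj}$ has the same row and column indices as $x_{lm}x_{ij}$.
\item Fix a total order on $\mathbb Z^{2n}$ compatible with addition. Compare the largest and the smallest multidegrees on both sides of $yx_{uv}=zy$. The extreme products are products of two surviving generators, hence nonzero by your PBW basis. This forces $z=\kappa x_{uv}$.
\item Comparing multihomogeneous components then gives $x_{ab}x_{uv}=\kappa x_{uv}x_{ab}$ for every $(a,b)$ in the support of $y$.
\item Now let $|a-b|<d$. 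Take $(u,v)=(a-1,b-1)$, or $(a+1,b+1)$ when $a=1$ or $b=1$; the latter is available since $|a-b|<d\le n-1$. In either case $x_{uv}$ and both corner generators survive.
\item The correction term in the relation between $x_{ab}$ and $x_{uv}$ then gives $(\lambda-1)c_{ab}=0$ in the first case. In the second case it gives $\kappa(\lambda-1)=0$, hence $\kappa=0$ and $x_{ab}x_{uv}=0$. Either way this is a contradiction, and only $\lambda\ne1$ is used.
\end{itemize}
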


According to this theorem, one has
\begin{equation}\label{eq:filtration}
      I_k=\langle x_{ij}:|i-j|\ge n-k-1\rangle, \quad 0\le k< n.
\end{equation}
In particular,
\[
      I_0=\langle x_{1n},x_{n1}\rangle, \quad I_{n-2}=\langle x_{ij}:i\ne j\rangle,
\]
and the degree-one component of $I_k$ is
\begin{equation}\label{eq:filtration-degree-one}
      (I_k)_1=\operatorname{span}_{\kk}\{x_{ij}:|i-j|\ge n-k-1\}.
\end{equation}
This filtration gives the following restrictions on graded isomorphisms.

Let $B = \Mat{\mu}{\mathbf q}$ with $\mu\ne1$ and standard generators $\{y_{ij}\}$. 
Write $I_k(A)$ and $I_k(B)$ for the corresponding ideals in $A$ and $B$ defined above, respectively.
\begin{lemma}\label{lem:filtration}
      Let $\Phi:A\longrightarrow B$ be a graded isomorphism. Then 

      {\rm (1)} $\Phi(I_k(A))=I_k(B)$ for all $0\le k< n$.
      
      {\rm (2)} If $\Phi(x_{ij})=\sum\limits_{s,t}a^{ij}_{st}y_{st}$, then
      \begin{equation}\label{eq:support-bound}
            a^{ij}_{st}\ne0\quad\Longrightarrow\quad |s-t|\ge|i-j|.
      \end{equation}

      {\rm (3)} The two images $\Phi(x_{1n})$ and $\Phi(x_{n1})$ are nonzero scalar multiples of $y_{1n}$ and $y_{n1}$, in one order or the other.
\end{lemma}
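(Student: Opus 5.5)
For part (1), the plan is to show that each ideal $I_k$ is defined intrinsically in terms of the graded algebra structure, so it must be carried to its counterpart by any graded isomorphism; we argue by induction on $k$. Take $k=0$ first. The elements of $A$ that are homogeneous of degree one and normal are characterized only through the multiplication and the grading. Consequently $\Phi$ sends the set of degree-one normal elements of $A$ bijectively onto the set of degree-one normal elements of $B$. Since $I_0(A)$ is the ideal these elements generate, we get $\Phi(I_0(A))=I_0(B)$. For the inductive step, suppose $\Phi(I_{k-1}(A))=I_{k-1}(B)$. Then $\Phi$ induces a graded isomorphism $\bar\Phi:A/I_{k-1}(A)\to B/I_{k-1}(B)$. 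Both ideals are generated in degree one by \eqref{eq:filtration}, so the quotient gradings are well defined. Exactly as in the base case, $\bar\Phi$ matches degree-one normal elements with degree-one normal elements, and hence matches the ideals they generate. This gives $\Phi(I_k(A))=I_k(B)$. Theorem~\ref{thm:normal-filtration} is used here only to make $I_k$ concrete; the equivariance itself is purely formal.

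Part (2) follows by restricting part (1) to degree one. Fix $x_{ij}$ and set $d=|i-j|$, then choose $k=n-1-d$, which lies in $\{0,\dots,n-1\}$. By \eqref{eq:filtration-degree-one}, $x_{ij}\in(I_k(A))_1$. Because $\Phi$ is graded and $\Phi(I_k(A))=I_k(B)$, it follows that $\Phi(x_{ij})\in (I_k(B))_1$. By \eqref{eq:filtration-degree-one} applied to $B$, this space is $\operatorname{span}\{y_{st}:|s-t|\ge n-k-1=d\}$. The $y_{st}$ are linearly independent because they belong to the standard basis. Therefore every coefficient $a^{ij}_{st}\ne0$ satisfies $|s-t|\ge d$, which is \eqref{eq:support-bound}.

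Part (3) comes from applying (2) with $|i-j|=n-1$. The only indices $(s,t)$ with $|s-t|\ge n-1$ are $(1,n)$ and $(n,1)$, so both $\Phi(x_{1n})$ and $\Phi(x_{n1})$ lie in $\operatorname{span}\{y_{1n},y_{n1}\}$. The restriction of $\Phi$ to $(I_0(A))_1$ is a linear bijection onto $(I_0(B))_1$, and each of these spaces is two-dimensional. The step I expect to be the main obstacle is ruling out images that mix $y_{1n}$ and $y_{n1}$. To handle it, I would use the relation between $x_{1n}$ and $x_{n1}$. Since $n>1$ and $1<n$, this pair falls in the case $l>i,\ m\le j$ of \eqref{eq:relations}, which gives $x_{n1}x_{1n}=\lambda p_{n1}p_{n1}x_{1n}x_{n1}$. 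So the two generators $q$-commute with parameter $c=\lambda p_{n1}^2$, and likewise with $c'=\mu q_{n1}^2$ in $B$.

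To complete (3), write $\Phi(x_{1n})=a y_{1n}+b y_{n1}$ and $\Phi(x_{n1})=c_1y_{1n}+d_1y_{n1}$, substitute into the relation, and expand in the standard basis of $B_2$. Using $y_{n1}y_{1n}=c'y_{1n}y_{n1}$, the coefficient of $y_{1n}^2$ is $ac_1(1-c)$, the coefficient of $y_{n1}^2$ is $bd_1(1-c)$, and the coefficient of $y_{1n}y_{n1}$ gives one further equation. If $c\ne1$, then $ac_1=bd_1=0$, and invertibility of the matrix $\begin{pmatrix}a&b\\ c_1&d_1\end{pmatrix}$ forces it to be diagonal or antidiagonal, which is exactly the statement. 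If $c=1$, the generators $x_{1n}$ and $x_{n1}$ commute, and this argument gives nothing. In that case I would instead use commutation with other degree-one generators. For instance, $x_{1n}$ and $x_{n1}$ are normal elements whose normalizing automorphisms act on $x_{11}$ with different scalars ($p_{n1}$-type versus $\lambda$-type factors), so a mixed combination $ay_{1n}+by_{n1}$ with $ab\ne0$ would fail to be normal. That would contradict the fact, from the proof of (1), that $\Phi$ preserves degree-one normal elements.
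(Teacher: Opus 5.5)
Your parts (1) and (2) are correct and match the paper's argument. Your treatment of (3) is also correct when $c=\lambda p_{n1}^2\ne1$.

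The gap is the case $c=1$, and it cannot be set aside: it is exactly the transposition case of Theorem~\ref{thm:main} and includes $\mathcal O_q(M_n(\kk))$. Your fallback rests on a false claim. By \eqref{eq:relations}, $x_{1n}x_{11}=p_{1n}x_{11}x_{1n}$ and $x_{n1}x_{11}=\lambda p_{n1}x_{11}x_{n1}$. Since $p_{1n}=p_{n1}^{-1}$, these scalars coincide precisely when $\lambda p_{n1}^2=1$. So in this case the two normal elements act on $x_{11}$ by the same scalar.

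For $n=2$ the idea fails completely. Put $p=p_{21}$ with $\lambda p^2=1$. Then $x_{12}$ and $x_{21}$ commute, and both satisfy $zx_{11}=p^{-1}x_{11}z$ and $zx_{22}=p\,x_{22}z$. Hence every element of $\kk x_{12}\oplus\kk x_{21}$ is normal, and no normality argument can exclude a mixed image. For $n\ge3$ the two characters can still be separated using edge generators $y_{1s}$ and $y_{s1}$. But you would have to verify this, and also prove that a combination $ay_{1n}+by_{n1}$ with $ab\ne0$ is then not normal, which you only assert.

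A uniform fix, needing only $\lambda\ne1$ and no case split on $c$, uses the relation $x_{nn}x_{11}-x_{11}x_{nn}=(\lambda-1)p_{n1}x_{1n}x_{n1}$.

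\begin{enumerate}
\item After applying $\Phi$, the right side is $(\lambda-1)p_{n1}(ay_{1n}+by_{n1})(c_1y_{1n}+d_1y_{n1})$. Its coefficients of $y_{1n}^2$ and $y_{n1}^2$ are $(\lambda-1)p_{n1}ac_1$ and $(\lambda-1)p_{n1}bd_1$.
\item The left side is the commutator $\Phi(x_{nn})\Phi(x_{11})-\Phi(x_{11})\Phi(x_{nn})$ of two degree-one elements of $B$.
\item By the defining relations of $B$, the only product of two generators whose standard-basis expansion involves $y_{1n}^2$ is $y_{1n}y_{1n}$ itself. Indeed, the extra term $y_{im}y_{lj}$ created by rewriting $y_{lm}y_{ij}$ has $i<l$. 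The same holds for $y_{n1}^2$.
\item These contributions cancel in a commutator, so $ac_1=bd_1=0$.
\end{enumerate}

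Your invertibility argument then yields the diagonal or antidiagonal form. The paper does not reprove (3) itself; it appeals to the proof of Gaddis's Lemma~5.3(3), noting that only $\lambda\ne1$ is needed. The argument above is a self-contained substitute that covers the case your proposal misses.
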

\begin{proof}
      (1) We use induction on $k$. 
      A graded isomorphism preserves degree-one normal elements, hence $\Phi(I_0(A))=I_0(B)$. 
      Suppose $\Phi(I_{k-1}(A))=I_{k-1}(B)$ for some $1\le k<n$. Then $\Phi$ induces a graded isomorphism $A/I_{k-1}(A)\to B/I_{k-1}(B)$.
      This isomorphism preserves degree-one normal elements, so it maps $I_k(A)/I_{k-1}(A)$ onto $I_k(B)/I_{k-1}(B)$. Hence $\Phi(I_k(A))=I_k(B)$. 
 
      (2) Put $k=n-1-|i-j|$, so that $0\le k<n$. By \eqref{eq:filtration}, $x_{ij}\in (I_k(A))_1$. Part (1) and \eqref{eq:filtration-degree-one} imply
      \[
            \Phi(x_{ij})\in (I_k(B))_1=\operatorname{span}_{\kk}\{y_{st}:|s-t|\ge |i-j|\}.
      \]
      Since the generators $\{y_{st}\}$ are linearly independent, one has $a^{ij}_{st}=0$ whenever $|s-t|<|i-j|$. This proves \eqref{eq:support-bound}.

      (3) This follows from (1) and the proof of \cite[Lemma~5.3(3)]{Gaddis2017}, which uses only $\lambda\ne1$.
\end{proof}

\subsection{Standard isomorphisms} 
We use two graded isomorphisms; see \cite[Propositions~5.1--5.2]{Gaddis2017}. If $B=\Mat{\lambda}{\mathbf q}$ with ${\mathbf q}$ satisfying
\[
      q_{ij}=\lambda^{-1}p_{ji},\quad i>j,
\]
then
\begin{equation}\label{eq:transpose-iso}
      \Phi_{\tau}:A\longrightarrow B, \quad x_{ij}\longmapsto y_{ji},
\end{equation}
is a graded isomorphism. Similarly, if $B=\Mat{\lambda^{-1}}{\mathbf q}$ and
\[
      q_{ij}=p_{\bar{i},\bar{j}},\quad i>j,
\]
then
\begin{equation}\label{eq:reversal-iso}
      \Phi_{\sigma}:A\longrightarrow B, \quad x_{ij}\longmapsto y_{\bar{i},\bar{j}},
\end{equation}
is a graded isomorphism. These assertions can also be checked directly from \eqref{eq:relations}.


Recall the assignments on the standard generators of $A$:
\[
      \tau(x_{ij})=x_{ji},\quad \sigma(x_{ij})=x_{\bar{i},\bar{j}}.
\]
We now determine when $\tau$ and $\sigma$ define automorphisms of $A$.

\begin{proposition}\label{prop:symmetries}
      {\rm(1)} The assignment $\tau$ extends to an automorphism of $A$ if and only if $\lambda p_{ij}^2=1$ for all $i>j$. 

      {\rm(2)} The assignment $\sigma$ extends to an automorphism of $A$ if and only if $\lambda=-1$ and $p_{ij}=p_{\bar{i},\bar{j}}$ for all $i>j$. 

      {\rm(3)} The two assignments cannot both extend to automorphisms of $A$.
\end{proposition}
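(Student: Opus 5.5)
The approach is to compare the image of one defining relation with the relations of $A$ and use linear independence of the standard basis; sufficiency will come from the standard isomorphisms \eqref{eq:transpose-iso} and \eqref{eq:reversal-iso}.

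For (1), take $B=\Mat{\lambda}{\mathbf q}$ with $q_{ij}=\lambda^{-1}p_{ji}$ for $i>j$. By antisymmetry, $q_{ij}=\lambda^{-1}p_{ij}^{-1}$, so $\mathbf q=\mathbf p$ exactly when $\lambda p_{ij}^2=1$ for all $i>j$. In that case $B=A$ and $\tau=\Phi_\tau$ is an automorphism, which gives sufficiency.

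For necessity, apply $\tau$ to the same-row relation $x_{im}x_{ij}=p_{jm}x_{ij}x_{im}$ with $m>j$. This gives $x_{mi}x_{ji}=p_{jm}x_{ji}x_{mi}$ in $A$. The column relation of $A$ gives $x_{mi}x_{ji}=\lambda p_{mj}x_{ji}x_{mi}$. Since $A$ is a domain and $x_{ji}x_{mi}$ is a nonzero basis monomial, $p_{jm}=\lambda p_{mj}$, that is, $\lambda p_{mj}^2=1$.

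For (2), sufficiency is similar. If $\lambda=-1$, then $\lambda^{-1}=\lambda$, and $p_{ij}=p_{\bar i,\bar j}$ says the target of $\Phi_\sigma$ is $A$ itself, so $\sigma=\Phi_\sigma$.

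For necessity I first extract $\lambda=-1$ from two relations.

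The first is $x_{21}x_{11}=\lambda p_{21}x_{11}x_{21}$. Applying $\sigma$ and comparing with the relation $x_{nn}x_{n-1,n}=\lambda p_{n,n-1}x_{n-1,n}x_{nn}$ of $A$ gives
\[
\lambda^2p_{21}p_{n,n-1}=1.
\]

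The second is the quadratic relation $x_{22}x_{11}=x_{11}x_{22}+(\lambda-1)p_{21}x_{12}x_{21}$. Its image under $\sigma$ is
\[
x_{n-1,n-1}x_{nn}=x_{nn}x_{n-1,n-1}+(\lambda-1)p_{21}x_{n,n-1}x_{n-1,n}.
\]
Rewrite $x_{nn}x_{n-1,n-1}$ by the quadratic relation of $A$, and rewrite $x_{n,n-1}x_{n-1,n}=\lambda p_{n,n-1}^2x_{n-1,n}x_{n,n-1}$. Comparing coefficients of the basis monomial $x_{n-1,n}x_{n,n-1}$ gives $(\lambda-1)p_{n,n-1}(1+\lambda p_{21}p_{n,n-1})=0$. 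Hence $\lambda p_{21}p_{n,n-1}=-1$.

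Combining this with $\lambda^2p_{21}p_{n,n-1}=1$ gives $\lambda=-1$.

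Now let $B=\Mat{-1}{\mathbf q}$ with $q_{ij}=p_{\bar i,\bar j}$. Then $\Phi_\sigma\colon A\to B$ is an isomorphism, and $\sigma$ is an algebra map exactly when the relations of $B$ hold in $A$ under $y_{ij}\mapsto x_{ij}$. Compare the relations with $l=i$, $m>j$, and those with $l>i$, $m=j$. These are monomial relations between distinct generators, and basis independence forces $q_{jm}=p_{jm}$ and $q_{li}=p_{li}$. So $\mathbf q=\mathbf p$, which is the stated condition.

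For (3), suppose both assignments extend. Then $\lambda=-1\ne1$, so $\operatorname{char}\kk\ne2$. Part (1) gives $p_{n1}^2=-1$. Part (2) with $(i,j)=(n,1)$ gives $p_{n1}=p_{1n}=p_{n1}^{-1}$, so $p_{n1}^2=1$. Then $1=-1$, a contradiction.

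The main obstacle is the necessity of $\lambda=-1$ in (2). The monomial relations alone only yield products of the form $\lambda^2\cdot(\text{parameters})=1$. The quadratic relation, which carries the $(\lambda-1)$ term, is needed to separate $\lambda$ from the $p$'s, and this requires the ordering bookkeeping above to be done carefully.
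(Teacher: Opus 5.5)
Your argument is correct. Part (1), part (3), and both sufficiency directions are essentially the paper's.

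The difference is in the necessity half of (2). You pair one monomial relation ($\sigma$ applied to $x_{21}x_{11}=\lambda p_{21}x_{11}x_{21}$, compared with $x_{nn}x_{n-1,n}$) with the image of the quadratic relation $x_{22}x_{11}-x_{11}x_{22}=(\lambda-1)p_{21}x_{12}x_{21}$. This gives $\lambda^2p_{21}p_{n,n-1}=1$ and $\lambda p_{21}p_{n,n-1}=-1$, hence $\lambda=-1$ outright. Only afterwards do you extract $p_{ij}=p_{\bar i,\bar j}$, by pulling back the relations of the target of $\Phi_\sigma$.

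The paper never uses a quadratic relation. Comparing the image of the first-row relation with the last-row relation gives $p_{ij}=p_{\bar i,\bar j}$, with no $\lambda$ present. Comparing the image of the first-column relation with the last-column relation gives $\lambda^2p_{ij}p_{\bar j,\bar i}=1$, which the row result reduces to $\lambda^2=1$. The standing hypothesis $\lambda\ne1$ then yields $\lambda=-1$.

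So the ``main obstacle'' you describe at the end is not real. The row relations already separate the $p$'s from $\lambda$, and $\lambda^2=1$ suffices because $\lambda\ne1$.

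Your route pins down $\lambda$ from the top-left and bottom-right $2\times2$ blocks before anything is known about $\mathbf p$. The paper's route buys a shorter, purely monomial computation with no normal-ordering bookkeeping.
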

\begin{proof} We fix $n\ge i>j\ge 1$ throughout the proof. Thus by definition one has $\bar{i}<\bar{j}$.

      (1) Suppose $\tau$ is an automorphism. Applying it to the first-row relation $x_{1i}x_{1j}=p_{ji}x_{1j}x_{1i}$ gives $x_{i1}x_{j1}=p_{ji}x_{j1}x_{i1}$. The first-column relation is $x_{i1}x_{j1}=\lambda p_{ij}x_{j1}x_{i1}$. Combining these identities gives $\lambda p_{ij}^2=1$.
      Conversely, if $\lambda p_{ij}^2=1$ for all $i>j$, then $\tau$ is an automorphism of $A$ by \eqref{eq:transpose-iso}.

      (2) Suppose $\sigma$ is an automorphism. Applying it to the first-row relation gives
      \[
            x_{n,\bar{i}}x_{n,\bar{j}}=p_{ji}x_{n,\bar{j}}x_{n,\bar{i}}.
      \]
      The last-row relation is $x_{n,\bar{j}}x_{n,\bar{i}} =p_{\bar{i},\bar{j}}x_{n,\bar{i}}x_{n,\bar{j}}$.
      Combining these identities gives $p_{ij}=p_{\bar{i},\bar{j}}$.
      Applying $\sigma$ to the first-column relation gives
      \[
            x_{\bar{i},n}x_{\bar{j},n}  =\lambda p_{ij}x_{\bar{j},n}x_{\bar{i},n}.
      \]
      The last-column relation is  $x_{\bar{j},n}x_{\bar{i},n} =\lambda p_{\bar{j},\bar{i}}x_{\bar{i},n}x_{\bar{j},n}$.
      Combining these identities gives $1=\lambda^2p_{ij}p_{\bar{j},\bar{i}}=\lambda^2$.
      Since $\lambda\ne1$, we get $\lambda=-1$. 

      Conversely, if $\lambda=-1$ and $p_{ij}=p_{\bar{i},\bar{j}}$  for all $i>j$, then $\sigma$ is an automorphism of $A$ by \eqref{eq:reversal-iso}.

      (3) If both maps were automorphisms, part (2) would give $p_{n1}=p_{1n}$, hence $p_{n1}^2=1$, and part (1) would give $\lambda=1$, a contradiction.
\end{proof}

\section{The two-by-two case}\label{sec:two} 
In this section, let $A=\mathcal O_{\lambda,\mathbf p}(M_2(\kk))$ and write $p=p_{21}$. The notation $\HH$, $\tau$, and $\sigma$ refers to the group of torus automorphisms, transposition, and reversal, respectively, in the case $n=2$. 
The defining relations of $A$ are
\begin{equation}\label{eq:two-relations}
      \begin{gathered}
      x_{12}x_{11}=p^{-1}x_{11}x_{12},\quad x_{21}x_{11}=\lambda p x_{11}x_{21},\\
      x_{22}x_{12}=\lambda p x_{12}x_{22},\quad x_{22}x_{21}=p^{-1}x_{21}x_{22},\\
      x_{21}x_{12}=\lambda p^2x_{12}x_{21},\quad x_{22}x_{11}-x_{11}x_{22}=(\lambda-1)p x_{12}x_{21}.
      \end{gathered}
\end{equation}
By Lemma~\ref{lem:filtration}(3), any graded automorphism $\phi$ of $A$ either preserves or swaps the one-dimensional subspaces $\kk x_{12}$ and $\kk x_{21}$. 
The following lemma treats the first case.

\begin{lemma}\label{lem:two-fixed}
      If $\phi(x_{12})=\beta x_{12}$ and $\phi(x_{21})=\gamma x_{21}$ for a graded automorphism $\phi$, then $\phi(x_{11})=\alpha x_{11}$ and $\phi(x_{22})=\delta x_{22}$ for some $\alpha,\delta\in\kk^*$.
\end{lemma}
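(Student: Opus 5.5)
The plan is to show directly that every off-diagonal coefficient in $\phi(x_{11})$ and $\phi(x_{22})$ vanishes, by applying $\phi$ to the relations \eqref{eq:two-relations} and comparing coefficients in the standard basis. Since $\phi$ is graded, we can write
\[
\phi(x_{11})=a_{11}x_{11}+a_{12}x_{12}+a_{21}x_{21}+a_{22}x_{22},\qquad
\phi(x_{22})=b_{11}x_{11}+b_{12}x_{12}+b_{21}x_{21}+b_{22}x_{22}.
\]
To compare coefficients, every degree-two product will be rewritten as a combination of ordered monomials $x_{11}x_{12},\,x_{11}x_{21},\dots$, which form part of the standard basis (Section~\ref{subsec:torus}).

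\emph{Step 1 (the diagonal coefficients).} Apply $\phi$ to $x_{12}x_{11}=p^{-1}x_{11}x_{12}$ and cancel $\beta$. The $x_{12}x_{22}$-term gives $a_{22}(1-\lambda)=0$, because $x_{12}x_{22}-p^{-1}x_{22}x_{12}=(1-\lambda)x_{12}x_{22}$. Since $\lambda\ne1$, this forces $a_{22}=0$. Symmetrically, applying $\phi$ to $x_{22}x_{12}=\lambda p\,x_{12}x_{22}$ and looking at the $x_{11}x_{12}$-term gives $b_{11}=0$.

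Now $\phi$ is bijective on $A_1$, and the images of $x_{12}$ and $x_{21}$ have no $x_{11}$ or $x_{22}$ component. Hence the $x_{11}$-coordinate of $A_1$ can only be reached through $a_{11}$, and the $x_{22}$-coordinate only through $b_{22}$. Therefore $a_{11}\ne0$ and $b_{22}\ne0$.

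\emph{Step 2 (partial vanishing).} The same two relations, together with $x_{21}x_{11}=\lambda p\,x_{11}x_{21}$ and $x_{22}x_{21}=p^{-1}x_{21}x_{22}$, give conditions of the form
\[
a_{12}(1-p^{-1})=0,\qquad a_{21}(1-\lambda p)=0,
\]
and the analogous conditions for $b_{12}$ and $b_{21}$. These settle everything except the two degenerate cases $p=1$ (where $a_{12}$ and $b_{12}$ survive) and $\lambda p=1$ (where $a_{21}$ and $b_{21}$ survive). The two cases cannot occur simultaneously, since that would force $\lambda=1$.

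\emph{Step 3 (the main obstacle: the degenerate cases).} Here I will use the inhomogeneous relation
\[
x_{22}x_{11}-x_{11}x_{22}=(\lambda-1)p\,x_{12}x_{21},
\]
whose image under $\phi$ is $(\lambda-1)p\beta\gamma\,x_{12}x_{21}$. Expand $\phi(x_{22})\phi(x_{11})-\phi(x_{11})\phi(x_{22})$ using $a_{22}=b_{11}=0$, and read off the coefficients of $x_{11}x_{12}$, $x_{11}x_{21}$, $x_{12}x_{22}$ and $x_{21}x_{22}$. Each of these should be a single product, up to a parameter factor:
\begin{itemize}
\item $x_{11}x_{12}$ gives $a_{11}b_{12}(p^{-1}-1)$;
\item $x_{12}x_{22}$ gives $a_{12}b_{22}(1-\lambda p)$;
\item $x_{11}x_{21}$ and $x_{21}x_{22}$ give $a_{11}b_{21}$ and $a_{21}b_{22}$, each times a factor $(\lambda p-1)$ or $(p^{-1}-1)$.
\end{itemize}
When $p=1$, the factor $1-\lambda p=1-\lambda$ is nonzero, so $b_{22}\ne0$ gives $a_{12}=0$. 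The mirrored term and $a_{11}\ne0$ similarly give $b_{12}=0$. When $\lambda p=1$ we have $p\ne1$, and the symmetric argument with the $x_{21}$-terms gives $a_{21}=b_{21}=0$.

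The delicate point is the bookkeeping in Step 3. One must track all cross terms, check that the factor $\lambda-1$ attached to $x_{12}x_{21}$ does not contaminate the monomials being compared, and confirm that in each degenerate case the surviving scalar factor is indeed nonzero. Once this is done, $\phi(x_{11})=\alpha x_{11}$ and $\phi(x_{22})=\delta x_{22}$ with $\alpha=a_{11}$ and $\delta=b_{22}$, both nonzero by Step 1.
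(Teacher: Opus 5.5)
Your overall strategy matches the paper's. You use the $(1-\lambda)$-terms to kill $a_{22}$ and $b_{11}$, get $a_{11}b_{22}\neq0$ (your row argument on $A_1$ is a valid substitute for the paper's quotient by $\langle x_{12},x_{21}\rangle$), and then combine the normal relations with the cross relation. The four cross-relation coefficients you list are also correct.

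The gap is in the degenerate-case bookkeeping of Steps 2--3. The conditions on $b_{12},b_{21}$ from the normal relations are not analogous to those on $a_{12},a_{21}$; the factors are swapped. Comparing $x_{12}^2$ in $\phi(x_{22})x_{12}=\lambda p\,x_{12}\phi(x_{22})$ gives $b_{12}(1-\lambda p)=0$. Comparing $x_{21}^2$ in $\phi(x_{22})x_{21}=p^{-1}x_{21}\phi(x_{22})$ gives $b_{21}(1-p^{-1})=0$. So at $p=1$ the survivors are $a_{12}$ and $b_{21}$, not $b_{12}$. At $\lambda p=1$ they are $a_{21}$ and $b_{12}$, not $b_{21}$.

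Because of this, two deductions in Step 3 fail. At $p=1$, the ``mirrored'' coefficient of $x_{11}x_{12}$ is $a_{11}b_{12}(p^{-1}-1)$, which vanishes identically, so it cannot give $b_{12}=0$. At $\lambda p=1$, the coefficient of $x_{11}x_{21}$ is $a_{11}b_{21}(\lambda p-1)$, again identically zero. Read literally, your argument never shows $b_{21}=0$ when $p=1$, nor $b_{12}=0$ when $\lambda p=1$.

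The repair uses only what you already have, once the factors are pinned down. In $\phi(x_{22})\phi(x_{11})-\phi(x_{11})\phi(x_{22})$, the coefficients of $x_{11}x_{12}$, $x_{11}x_{21}$, $x_{12}x_{22}$, $x_{21}x_{22}$ are
\[
a_{11}b_{12}(p^{-1}-1),\quad a_{11}b_{21}(\lambda p-1),\quad a_{12}b_{22}(\lambda p-1),\quad a_{21}b_{22}(p^{-1}-1).
\]
Together with the normal-relation conditions, each of $a_{12},a_{21},b_{12},b_{21}$ is multiplied by $p^{-1}-1$ in one equation and by $\lambda p-1$ in the other. These two factors cannot both vanish since $\lambda\neq1$, so all four coefficients are zero with no case split. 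This is exactly how the paper's proof concludes.

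If you prefer to keep your cases, the correct pairing is as follows. At $p=1$, kill $a_{12}$ via $x_{12}x_{22}$ and $b_{21}$ via $x_{11}x_{21}$, both with factor $\lambda-1$. At $\lambda p=1$, kill $a_{21}$ via $x_{21}x_{22}$ and $b_{12}$ via $x_{11}x_{12}$, both with factor $p^{-1}-1\neq0$.
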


\begin{proof}
      Write
      \begin{align*}
            \phi(x_{11})&=a_{11}x_{11}+a_{12}x_{12}+a_{21}x_{21}+a_{22}x_{22},\\
            \phi(x_{22})&=d_{11}x_{11}+d_{12}x_{12}+d_{21}x_{21}+d_{22}x_{22}.
      \end{align*}
      Applying $\phi$ to the relations of $x_{12}$ with $x_{11}$ and $x_{22}$ gives
      \[
            x_{12}\phi(x_{11})=p^{-1}\phi(x_{11})x_{12},\quad \phi(x_{22})x_{12}=\lambda p x_{12}\phi(x_{22}).
      \]
      Comparing the coefficients of $x_{12}x_{22}$ in the first equation and $x_{11}x_{12}$ in the second yields
      \[
            (1-\lambda)a_{22}=(1-\lambda)d_{11}=0.
      \]
      Thus $a_{22}=d_{11}=0$. 
      The induced graded automorphism of $A/\langle x_{12},x_{21}\rangle$ sends $x_{11}$ to $a_{11}x_{11}$ and $x_{22}$ to $d_{22}x_{22}$. Thus $a_{11}d_{22}\ne0$.

      Applying $\phi$ to the first two relations in \eqref{eq:two-relations} gives
      \[
          x_{12}\phi(x_{11}) =p^{-1}\phi(x_{11}) x_{12},\quad x_{21}\phi(x_{11}) = \lambda p \phi(x_{11})x_{21}.
      \]
      Comparing the coefficients of $x_{12}^2$ and $x_{21}^2$, respectively, we obtain
      \begin{equation}\label{eq:two-1}
            a_{12}(p^{-1}-1)=0,\quad a_{21}(\lambda p-1)=0.
      \end{equation}
      Since $d_{22}\ne0$, comparing the coefficients of $x_{12}x_{22}$ and $x_{21}x_{22}$ on both sides of
      \[
            \phi(x_{22})\phi(x_{11})-\phi(x_{11})\phi(x_{22}) =(\lambda-1)p\beta\gamma x_{12}x_{21},
      \]
      we obtain 
      \begin{equation}\label{eq:two-2}
           a_{12}(\lambda p-1)=0,\quad a_{21}(p^{-1}-1)=0.
      \end{equation}
      Since $\lambda\ne1$, the factors $p^{-1}-1$ and $\lambda p-1$ cannot both be zero. Thus \eqref{eq:two-1} and \eqref{eq:two-2} give $a_{12}=a_{21}=0$.

      Now $\phi(x_{11})=a_{11}x_{11}$ and $\phi(x_{22})=d_{12}x_{12}+d_{21}x_{21}+d_{22}x_{22}$. Applying $\phi$ to the last relation in \eqref{eq:two-relations} gives
      \[
            a_{11}(d_{12}(p^{-1}-1)x_{11}x_{12} +d_{21}(\lambda p-1)x_{11}x_{21} +d_{22}(\lambda-1)p x_{12}x_{21}) =(\lambda-1)p\beta\gamma x_{12}x_{21}.
      \]
      Since $a_{11}\ne0$, comparing the coefficients of $x_{11}x_{12}$ and $x_{11}x_{21}$ gives
      \begin{equation}\label{eq:two-3}
            d_{12}(p^{-1}-1)=0,\quad d_{21}(\lambda p-1)=0.
      \end{equation}
      Applying $\phi$ to the relations of $x_{22}$ with $x_{12}$ and $x_{21}$ gives
      \[
            \phi(x_{22})x_{12}=\lambda p x_{12}\phi(x_{22}),\quad \phi(x_{22})x_{21}=p^{-1}x_{21}\phi(x_{22}).
      \]
      Comparing the coefficients of $x_{12}^2$ in the first equation and $x_{21}^2$ in the second gives
      \begin{equation}\label{eq:two-4}
            d_{12}(\lambda p-1)=0,\quad d_{21}(p^{-1}-1)=0.
      \end{equation}
      Since $\lambda\ne1$, the factors $p^{-1}-1$ and $\lambda p-1$ cannot both be zero. Thus \eqref{eq:two-3} and \eqref{eq:two-4} give $d_{12}=d_{21}=0$.
      Taking $\alpha=a_{11}$ and $\delta=d_{22}$ proves the assertion.
\end{proof}

We now determine $\Autgr(A)$ by considering automorphisms that swap $\kk x_{12}$ and $\kk x_{21}$.
\begin{proposition}\label{prop:two}
      The graded automorphism group of $A$ is
      \[
            \Autgr(A)=
            \begin{cases}
                  \HH\rtimes\langle\tau\rangle,&\lambda p^2=1,\\
                  \HH\rtimes\langle\sigma\rangle,&\lambda=-1,\ p^2=1,\\
                  \HH,&\text{otherwise}.
            \end{cases}
      \]
\end{proposition}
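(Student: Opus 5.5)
Let $\phi\in\Autgr(A)$. By Lemma~\ref{lem:filtration}(3), either $\phi$ preserves both lines $\kk x_{12}$ and $\kk x_{21}$, or it swaps them. In the first case, Lemma~\ref{lem:two-fixed} shows that $\phi$ is diagonal, and Lemma~\ref{lem:diagonal} then gives $\phi\in\HH$. So the whole problem reduces to the swapping case. There I would show that a swapping automorphism exists only under the stated parameter conditions, and that it then equals $\tau$ or $\sigma$ composed with a torus automorphism.

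Suppose $\phi(x_{12})=\beta x_{21}$ and $\phi(x_{21})=\gamma x_{12}$. Applying $\phi$ to $x_{21}x_{12}=\lambda p^2x_{12}x_{21}$ gives $x_{12}x_{21}=\lambda p^2x_{21}x_{12}$, and comparing with the original relation yields $(\lambda p^2)^2=1$, so $\lambda p^2=\pm1$. Next I would pin down $\phi(x_{11})$ and $\phi(x_{22})$ as in the proof of Lemma~\ref{lem:two-fixed}. First, the relations of $x_{12}$ with $x_{11}$ and $x_{22}$, transported by $\phi$, now involve $x_{21}$. Comparing coefficients of $x_{21}x_{22}$-type and $x_{11}x_{21}$-type monomials should show that each of $\phi(x_{11})$, $\phi(x_{22})$ has no component along one of $x_{11}$, $x_{22}$. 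Passing to $A/\langle x_{12},x_{21}\rangle$, which is a commutative polynomial ring in two variables, the induced map must be a linear automorphism sending each of $x_{11}$, $x_{22}$ to a multiple of $x_{11}$ or of $x_{22}$. Then the coefficient comparisons of the type in \eqref{eq:two-1}--\eqref{eq:two-4} kill the off-diagonal $x_{12}$, $x_{21}$ components, using that $p^{-1}-1$ and $\lambda p-1$ do not both vanish. This leaves two subcases.

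In subcase (a), $\phi(x_{11})=\alpha x_{11}$ and $\phi(x_{22})=\delta x_{22}$. Applying $\phi$ to $x_{12}x_{11}=p^{-1}x_{11}x_{12}$ gives $x_{21}x_{11}=p^{-1}x_{11}x_{21}$. Comparing with $x_{21}x_{11}=\lambda p x_{11}x_{21}$ forces $\lambda p^2=1$. The last relation is then consistent exactly when $\alpha\delta=\beta\gamma$, and $\phi\circ\tau^{-1}$ is diagonal, hence lies in $\HH$ by Lemma~\ref{lem:diagonal}. In subcase (b), $\phi(x_{11})=\alpha x_{22}$ and $\phi(x_{22})=\delta x_{11}$. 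Transporting $x_{12}x_{11}=p^{-1}x_{11}x_{12}$ gives $x_{21}x_{22}=p^{-1}x_{22}x_{21}$. Comparing with $x_{22}x_{21}=p^{-1}x_{21}x_{22}$ gives $p^2=1$. Transporting $x_{21}x_{11}=\lambda p x_{11}x_{21}$ gives $x_{12}x_{22}=\lambda p x_{22}x_{12}$, whence $\lambda^2p^2=1$, i.e.\ $\lambda^2=1$, and so $\lambda=-1$ since $\lambda\ne1$. Then $\phi\circ\sigma^{-1}$ is diagonal, hence in $\HH$. The conditions in the two subcases are incompatible by Proposition~\ref{prop:symmetries}(3), and conversely $\tau$ and $\sigma$ are automorphisms under these conditions by Proposition~\ref{prop:symmetries}. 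Since $\tau$ and $\sigma$ normalize $\HH$ (conjugation permutes the torus parameters) and have order two, we obtain the stated semidirect products.

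The main obstacle is the middle step: showing that in the swapping case $\phi(x_{11})$ and $\phi(x_{22})$ are each monomial in $x_{11}$ or $x_{22}$ with no $x_{12}$, $x_{21}$ components. Unlike Lemma~\ref{lem:two-fixed}, the transported $q$-commutation factors are exchanged, so the degenerate parameter values $p=1$ or $\lambda p=1$ must be handled with care. I would first try to run the argument of Lemma~\ref{lem:two-fixed} verbatim on $\phi\circ\tau$ formally: write out all coefficients of the transported relations in the standard basis and extract the vanishing conditions. If this gets messy, I would split into cases according to which of $p-1$ and $\lambda p-1$ vanish.
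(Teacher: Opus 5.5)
Your outline is correct, and the step you flag as the main obstacle does go through. The paper's proof, however, is organized so that this step never arises.

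In your route, the coefficients of $x_{11}x_{21}$ and $x_{21}x_{22}$ in the transported relation $x_{21}\phi(x_{11})=p^{-1}\phi(x_{11})x_{21}$ give $a_{11}(\lambda p^2-1)=0$ and $a_{22}(1-p^{-2})=0$. Analogous comparisons for $\phi(x_{22})$ yield your dichotomy. Note that the parameter conditions of subcases (a) and (b) already emerge at this point, not after the images are shown to be monomial.

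In subcase (b), the $q$-commutation relations with $x_{12}$ and $x_{21}$ alone do not remove the off-diagonal terms. For example, when $\lambda=p=-1$, the element $a_{22}x_{22}+a_{12}x_{12}$ satisfies both transported relations for $\phi(x_{11})$. So you genuinely need the commutator relation, in the analogue of \eqref{eq:two-2} with $d_{11}\ne0$ in place of $d_{22}\ne0$.

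The paper instead extracts the parameter conditions first and then reuses Case~1:
\begin{itemize}
\item If $\lambda p^2=1$, then $\tau$ is already an automorphism by Proposition~\ref{prop:symmetries}. So $\phi\circ\tau$ preserves both lines and lies in $\HH$ with no further computation; your idea of running Lemma~\ref{lem:two-fixed} on $\phi\circ\tau$ becomes literal.
\item If $\lambda p^2=-1$, the single comparison above gives $a_{11}=0$. Passing to $A/\langle x_{12},x_{21}\rangle$ then gives $a_{22}\ne0$, hence $p^2=1$ and $\lambda=-1$. So $\sigma$ is an automorphism and $\phi\circ\sigma$ falls into Case~1.
\end{itemize}

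Your approach determines the images of $x_{11}$ and $x_{22}$ explicitly, but it duplicates the work of Lemma~\ref{lem:two-fixed}. The paper's argument is shorter because it only needs enough information to know that $\tau$ or $\sigma$ is available.
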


\begin{proof}
      Let $\phi\in\Autgr(A)$. By Lemma~\ref{lem:filtration}(3), $\phi$ either preserves or swaps the subspaces $\kk x_{12}$ and $\kk x_{21}$.

      {\bf Case 1: $\phi$ preserves $\kk x_{12}$ and $\kk x_{21}$.}
      Lemma~\ref{lem:two-fixed} shows that $\phi$ is diagonal on all four generators. Hence $\phi\in\HH$ by Lemma~\ref{lem:diagonal}.

      {\bf Case 2: $\phi$ swaps $\kk x_{12}$ and $\kk x_{21}$.}
      Write $\phi(x_{12})=\beta x_{21}$ and $\phi(x_{21})=\gamma x_{12}$, where $\beta,\gamma\ne0$. Applying $\phi$ to $x_{21}x_{12}=\lambda p^2x_{12}x_{21}$ gives
      \[
            \beta\gamma x_{12}x_{21} =\lambda p^2\beta\gamma x_{21}x_{12}=\lambda^2p^4\beta\gamma x_{12}x_{21}.
      \]
      Put $t=\lambda p^2$, so $t^2=1$. 
      If $t=1$, Proposition~\ref{prop:symmetries} shows that $\tau$ is an automorphism. Since $\phi\circ\tau$ preserves $\kk x_{12}$ and $\kk x_{21}$, Case 1 gives $\phi\circ\tau\in\HH$, and hence $\phi\in\HH\tau$.

      Now suppose $t\ne1$. Then $t=-1$. Write $\phi(x_{11})=a_{11}x_{11}+a_{12}x_{12}+a_{21}x_{21}+a_{22}x_{22}$. Applying $\phi$ to $x_{12}x_{11}=p^{-1}x_{11}x_{12}$ and canceling $\beta$ gives
      \[
            x_{21}\phi(x_{11})=p^{-1}\phi(x_{11})x_{21}.
      \]
      Comparing the coefficient of $x_{11}x_{21}$ gives $a_{11}(\lambda p-p^{-1})=0$. Since $\lambda p-p^{-1}=p^{-1}(t-1)\ne0$, we obtain $a_{11}=0$. The induced graded automorphism of $A/\langle x_{12},x_{21}\rangle$ sends $x_{11}$ to $a_{22}x_{22}$. Thus $a_{22}\ne0$.
      Comparing the coefficient of $x_{21}x_{22}$ in the same equation gives
      \[
            a_{22}=p^{-2}a_{22}.
      \]
      Hence $p^2=1$ and $\lambda=-1$. By Proposition~\ref{prop:symmetries}, $\sigma$ is an automorphism in this case. Since $\phi\circ\sigma$ preserves $\kk x_{12}$ and $\kk x_{21}$, Case 1 gives $\phi\circ\sigma\in\HH$, and hence $\phi\in\HH\sigma$.

      Conversely, the indicated compositions are automorphisms whenever their parameter conditions hold. These conditions are disjoint by Proposition~\ref{prop:symmetries}. Each involution normalizes $\HH$ and generates a subgroup intersecting $\HH$ trivially, giving the asserted semidirect products.
\end{proof}

\section{Projection to the corner subalgebra}\label{sec:corners}
We distinguish the generators $\{x_{ij}\}$ of $A=\Mat{\lambda}{\mathbf p}$ according to their positions in the matrix $(x_{ij})$. 
We write the generators in matrix form for clarity:
\[  
    \begin{pmatrix}     
       x_{11} & \cdots  & x_{1j} & \cdots &x_{1n} \\
       \vdots &        & {\vdots} &        & \vdots \\
       x_{i1} & {\cdots} & {x_{ij}} & {\cdots} &{x_{in}} \\  
       {\vdots} &        & {\vdots} &        & {\vdots} \\
        x_{n1} & {\cdots} & {x_{nj}} & {\cdots} & {x_{nn}}
    \end{pmatrix}.
\]
We call $x_{11},x_{1n},x_{n1},x_{nn}$ the corner generators. The generators $x_{1j},x_{nj}$ with $1<j<n$, and $x_{i1},x_{in}$ with $1<i<n$, are called the edge generators. The remaining generators $x_{ij}$ with $1<i,j<n$ are called the interior generators.
Let
\[
      \Ccorner=\kk\langle x_{11},x_{1n},x_{n1},x_{nn}\rangle_{\rm alg}
\]
be the subalgebra of $A$ generated by the four corner generators. We call $\Ccorner$ the corner subalgebra. The subalgebra $\Ccorner$ inherits the grading of $A$, and its degree-one component is
\[
      \Ccorner_1 =\operatorname{span}_{\kk}\{x_{11},x_{1n},x_{n1},x_{nn}\}.
\]
The standard basis identifies $\Ccorner$ with the $2\times 2$ quantum matrix algebra with parameters $\lambda$ and $p_{n1}$.
Let $J$ be the ideal generated by all noncorner generators. Fixing the four corner generators and sending all other generators to zero gives an algebra homomorphism
\[
      \pi:A\longrightarrow\Ccorner.
\]
The map $\pi$ is the identity on $\Ccorner$ and induces an isomorphism $A/J\cong\Ccorner$.

In the proof of Theorem~\ref{thm:main}, we use standard isomorphisms to adjust the corner images.
These isomorphisms may change the parameters, so we work with graded isomorphisms in this section and the next.

Let $B=\Mat{\mu}{\mathbf q}$ with $\mu\ne1$ and standard generators $\{y_{ij}\}$.
Denote its corner subalgebra and corner projection by $\Ccorner_B$ and $\pi_B$, respectively.

\begin{lemma}\label{lem:corner-projection}
      For every graded isomorphism $\Phi:A\to B$, the map
      \[
            \psi=\pi_B\circ\Phi|_{\Ccorner}:\Ccorner\longrightarrow\Ccorner_B, \quad c\mapsto\pi_B(\Phi(c))
      \] 
      is a graded isomorphism.
\end{lemma}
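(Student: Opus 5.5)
The proof should rest on a dimension count rather than on controlling $\Phi$ on all of $J$. The map $\psi=\pi_B\circ\Phi|_{\Ccorner}$ is a composition of graded algebra homomorphisms, so it is a graded homomorphism $\Ccorner\to\Ccorner_B$. Via the standard basis, both $\Ccorner$ and $\Ccorner_B$ are $2\times2$ quantum matrix algebras. Hence both have the Hilbert series of a polynomial ring in four variables, and each graded component is finite-dimensional of the same dimension on both sides.

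Since $\Ccorner_B$ is generated in degree one, $\psi$ is surjective as soon as $\psi(\Ccorner_1)=(\Ccorner_B)_1$. A graded surjection between graded algebras whose components have equal finite dimension is bijective in each degree. So the whole statement reduces to the following claim: the $4\times4$ matrix giving the $y_{11},y_{1n},y_{n1},y_{nn}$-coefficients of $\Phi(x_{11}),\Phi(x_{1n}),\Phi(x_{n1}),\Phi(x_{nn})$ is invertible.

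For the two off-diagonal corners I would use Lemma~\ref{lem:filtration}(3). It says $\Phi(x_{1n})$ and $\Phi(x_{n1})$ are nonzero multiples of $y_{1n}$ and $y_{n1}$, in some order, so these rows contribute an invertible $2\times2$ block in the off-diagonal corner columns. It then remains to show two things about the $y_{11},y_{nn}$-coefficients of $\Phi(x_{11})$ and $\Phi(x_{nn})$. Write these coefficients as $(a_{11},a_{nn})$ and $(d_{11},d_{nn})$. First, the $y_{1n},y_{n1}$-coefficients of $\Phi(x_{11})$ and $\Phi(x_{nn})$ only add multiples of the already-independent rows, so they do not affect the determinant. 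Second, one needs
\[
a_{11}d_{nn}-a_{nn}d_{11}\neq0.
\]

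For this I would apply $\Phi$ to the defining relation
\[
x_{nn}x_{11}-x_{11}x_{nn}=(\lambda-1)p_{n1}x_{1n}x_{n1}.
\]
The right side becomes a nonzero multiple of $y_{1n}y_{n1}$ or $y_{n1}y_{1n}$, hence a nonzero multiple of the standard monomial $y_{1n}y_{n1}$ (after reordering by \eqref{eq:relations}). On the left, expand $\Phi(x_{11})$ and $\Phi(x_{nn})$ in the $y_{st}$ and compare coefficients of the standard monomial $y_{1n}y_{n1}$ in $B$.

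By \eqref{eq:relations}, among products of two degree-one generators, $y_{1n}y_{n1}$ arises only from $y_{1n}y_{n1}$ and $y_{n1}y_{1n}$ themselves and from the commutator term of $y_{nn}$ with $y_{11}$. The first two are products of off-diagonal corners. I expect to use the $l>i$, $m>j$ and $l>i$, $m\le j$ cases to check that no other pair of generators rewrites to contain $y_{1n}y_{n1}$. The comparison should then read
\[
(a_{11}d_{nn}-a_{nn}d_{11})(\mu-1)q_{n1}+(\text{terms from the $y_{1n},y_{n1}$-coefficients of }\Phi(x_{11}),\Phi(x_{nn})) = c\neq0.
\]

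\textbf{Main obstacle.} The difficulty is the correction terms coming from the $y_{1n},y_{n1}$-coefficients of $\Phi(x_{11})$ and $\Phi(x_{nn})$.

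My first attempt would remove them by working modulo the ideal generated by $y_{1n}$ and $y_{n1}$, equivalently modulo $I_0$, which Lemma~\ref{lem:filtration}(1) says $\Phi$ preserves. Modulo $I_0$ the relation becomes $[\bar\Phi(x_{nn}),\bar\Phi(x_{11})]=0$, which loses the information, so this does not work directly. Instead I would compare coefficients in $B$ itself. The product of two off-diagonal corner terms gives a multiple of $y_{1n}y_{n1}$ with coefficient $(\mu q_{n1}^2-1)$ times a $2\times2$ minor.

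If this route fails, the fallback is to pass to the induced isomorphism $A/I_{n-2}(A)\to B/I_{n-2}(B)$, a commutative polynomial ring on the diagonal generators. There I would argue with the $q$-commutation of $y_{11}$ and $y_{nn}$ against $y_{1n}$ and $y_{n1}$, which distinguishes $y_{11},y_{nn}$ from interior diagonal $y_{ii}$, to show that the $y_{11},y_{nn}$-components of $\Phi(x_{11})$ and $\Phi(x_{nn})$ are independent.
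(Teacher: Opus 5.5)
Your reduction is correct and matches the paper's, which phrases it as injectivity on $\Ccorner_1$. The map $\psi$ is a graded homomorphism between algebras with the same Hilbert series, and $\Phi(x_{1n}),\Phi(x_{n1})$ are supported on $y_{1n},y_{n1}$. So the $4\times4$ coefficient matrix is block triangular, and everything reduces to $\det D\ne0$, where $D$ is the matrix of $y_{11},y_{nn}$-coefficients of $\Phi(x_{11}),\Phi(x_{nn})$.

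\textbf{Gap.} The step you flag as the main obstacle is a real gap, and your main route does not close it. Write $\Phi(x_{11})=\sum c_{st}y_{st}$ and $\Phi(x_{nn})=\sum d_{st}y_{st}$. The $y_{1n}y_{n1}$-coefficient of the image of the cross relation gives
\[
(\mu-1)q_{n1}(c_{11}d_{nn}-c_{nn}d_{11})+(1-\mu q_{n1}^2)(c_{n1}d_{1n}-c_{1n}d_{n1})\neq0 .
\]
This says nothing about the first minor unless the second summand vanishes, and nothing available at this stage gives that. Suppose, for example, $\Phi(x_{1n})\in\kk^*y_{1n}$ and $\Phi(x_{n1})\in\kk^*y_{n1}$. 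Then the skew-commutation of $\Phi(x_{11})$ with $y_{1n},y_{n1}$ only yields $c_{n1}(\mu q_{n1}^2-p_{n1})=0$ and $c_{n1}(\lambda p_{n1}-1)=0$, and similarly for $d_{1n}$. So if $\lambda p_{n1}=1$ and $\mu q_{n1}^2=p_{n1}$ (automatic for $B=A$ once $\lambda p_{n1}=1$), these coefficients are not forced to vanish, while $1-\mu q_{n1}^2=1-p_{n1}\ne0$. In the paper such coefficients are removed only afterwards, by applying Proposition~\ref{prop:two} to $\psi$. That presupposes the present lemma, so you cannot borrow it here.

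\textbf{The fallback.} Your fallback points at the paper's actual argument, but as stated it cannot be run. In $B/I_{n-2}(B)$ the elements $y_{1n},y_{n1}$ are zero, so the distinguishing commutation must be used in $B$ before passing to the quotient. The paper does this with $z_A=x_{1n}x_{n1}$ and $z_B=y_{1n}y_{n1}$.
\begin{itemize}
\item By Lemma~\ref{lem:filtration}(3) and $y_{n1}y_{1n}=\mu q_{n1}^2y_{1n}y_{n1}$, one has $\Phi(z_A)\in\kk^*z_B$ in either ordering case.
\item In $A$, $x_{11}z_A=\lambda^{-1}z_Ax_{11}$ and $x_{nn}z_A=\lambda z_Ax_{nn}$. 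In $B$, $y_{ii}z_B=z_By_{ii}$ for $1<i<n$.
\item Every $y_{st}$ skew-commutes with $z_B$, and the products $z_By_{st}$ are linearly independent. Comparing coefficients of $z_By_{ii}$ gives $c_{ii}=\lambda^{-1}c_{ii}$ and $d_{ii}=\lambda d_{ii}$. Hence $\Phi(x_{11})$ and $\Phi(x_{nn})$ have no interior diagonal terms.
\item Only now pass to the quotient. By Lemma~\ref{lem:filtration}(1), $\Phi$ induces a bijection $(A/I_{n-2}(A))_1\to(B/I_{n-2}(B))_1$, whose $n\times n$ matrix in the bases of diagonal generators is invertible. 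Its rows for $x_{11}$ and $x_{nn}$ are supported on the columns $y_{11},y_{nn}$, so $D$ is invertible.
\end{itemize}
The paper states the same argument as injectivity: $\psi(u)=0$ forces $\Phi(u)\in I_{n-2}(B)$, hence $u\in\kk x_{1n}\oplus\kk x_{n1}$, hence $u=0$. With this step supplied, your outline becomes a complete proof.
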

\begin{proof}
      The standard basis shows that $\Ccorner$ and $\Ccorner_B$ have the same finite dimension in each degree. Since both algebras are generated in degree one, it suffices to show that $\psi$ is injective on $\Ccorner_1$.
      Let $u\in\Ccorner_1$ satisfy $\psi(u)=0$. Since $\pi_B(\Phi(u))=0$, $\Phi(u)$ is a linear combination of noncorner generators of $B$. We first claim that $\Phi(u)$ is a linear combination of off-diagonal generators.

      By Lemma~\ref{lem:filtration}(3), $\Phi(x_{1n}),\Phi(x_{n1})\in\kk y_{1n}\oplus\kk y_{n1}$.
      Since $u\in\Ccorner_1$, it suffices to show that the coefficients of $y_{ii}$ in $\Phi(x_{11})$ and $\Phi(x_{nn})$ are zero for $1<i<n$. Set $z_A=x_{1n}x_{n1}$ and $z_B=y_{1n}y_{n1}$. By Lemma~\ref{lem:filtration}(3) again, $\Phi(z_A)$ is a nonzero scalar multiple of either $y_{1n}y_{n1}$ or $y_{n1}y_{1n}$.
      Since
      \[
            y_{n1}y_{1n}=\mu q_{n1}^2y_{1n}y_{n1},
      \]
      one has $\Phi(z_A)\in\kk^*z_B$.
      The defining relations yield
      \[
            x_{11}z_A=\lambda^{-1}z_Ax_{11},\quad x_{nn}z_A=\lambda z_Ax_{nn},\quad y_{ii}z_B=z_By_{ii},\quad 1<i<n.
      \]
      Write $\Phi(x_{11})=\sum_{s,t}c_{st}y_{st}$. Applying $\Phi$ to the first relation and canceling the nonzero scalar in $\Phi(z_A)$ gives
      \[
            \Phi(x_{11})z_B=\lambda^{-1}z_B\Phi(x_{11}).
      \]
      Each generator of $B$ commutes up to a nonzero scalar with $z_B$, and the products $z_By_{st}$ are linearly independent. Comparing the coefficients of $z_By_{ii}$ gives $c_{ii}=\lambda^{-1}c_{ii}$ for $1<i<n$. Since $\lambda\ne1$, these coefficients vanish. The same argument applied to $x_{nn}z_A=\lambda z_Ax_{nn}$ excludes interior diagonal terms from $\Phi(x_{nn})$, proving the claim.

      Thus $\Phi(u)\in I_{n-2}(B)$. By Lemma~\ref{lem:filtration}(1), $\Phi(I_{n-2}(A))=I_{n-2}(B)$, so
      \[
            u\in\Ccorner_1\cap I_{n-2}(A)=\kk x_{1n}\oplus\kk x_{n1}.
      \]
      Hence $\Phi(u)\in\kk y_{1n}\oplus\kk y_{n1}$, and $\pi_B$ is the identity on this subspace. Thus $\Phi(u)=\psi(u)=0$, so $u=0$. This proves the lemma.
\end{proof}

In particular, for $\phi\in\Autgr(A)$, the map $\pi\circ\phi|_{\Ccorner}$ is a graded automorphism of $\Ccorner$.
By Proposition~\ref{prop:two}, it is a torus automorphism, possibly composed with transposition or reversal.

\section{The graded classification}\label{sec:classification}

We retain the notation of Section~\ref{sec:corners}.
We compose $\phi\in\Autgr(A)$ with a suitable standard isomorphism so that the resulting map has diagonal corner projection.
The next two lemmas show that this map is diagonal, first on the four corner generators and then on all generators.

\begin{lemma}[Lifting a diagonal corner action]\label{lem:corner-lift}
      Let $\Phi:A\to B$ be a graded isomorphism. Suppose
      \[
            \Phi(x_{11})=\alpha y_{11}+U, \quad\Phi(x_{1n})=\beta y_{1n},\quad \Phi(x_{n1})=\gamma y_{n1},\quad \Phi(x_{nn})=\delta y_{nn}+V,
      \]
      where $\alpha,\beta,\gamma,\delta\ne0$, and $U,V$ are linear combinations of noncorner generators. Then $U=V=0$.
\end{lemma}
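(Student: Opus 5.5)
The approach is to apply $\Phi$ to the defining relations linking $x_{11}$ and $x_{nn}$ with the two anti-diagonal corners, whose images are pure scalar multiples of $y_{1n}$ and $y_{n1}$, and then compare coefficients in the standard basis of $B$. Each standard generator $y_{st}$ of $B$ $q$-commutes with $y_{1n}$ and with $y_{n1}$: these two elements are the degree-one normal elements spanning $I_0(B)$. This is visible directly from \eqref{eq:relations}. So we may write
\[
y_{1n}y_{st}=\chi_{st}\,y_{st}y_{1n},\qquad y_{n1}y_{st}=\omega_{st}\,y_{st}y_{n1},
\]
with explicit scalars $\chi_{st},\omega_{st}\in\kk^*$ that are monomials in $\mu$ and the $q_{ij}$.

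\textbf{First step: corner coefficients.} Write $U=\sum u_{st}y_{st}$, with the sum over noncorner $(s,t)$. Applying $\Phi$ to $x_{1n}x_{11}=p_{1n}x_{11}x_{1n}$ and cancelling $\beta$ gives
\[
y_{1n}(\alpha y_{11}+U)=p_{1n}(\alpha y_{11}+U)y_{1n}.
\]
Rewriting the left side as $(\alpha\chi_{11}y_{11}+\sum u_{st}\chi_{st}y_{st})y_{1n}$, we compare coefficients of the linearly independent elements $y_{st}y_{1n}$. Since $\alpha\ne0$, this yields $\chi_{11}=p_{1n}$ and $u_{st}(\chi_{st}-\chi_{11})=0$. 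In the same way, the relation $x_{n1}x_{11}=\lambda p_{n1}x_{11}x_{n1}$ gives $u_{st}(\omega_{st}-\omega_{11})=0$.

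The same two tests apply to $V=\sum v_{st}y_{st}$, using the relations of $x_{nn}$ with $x_{1n}$ and $x_{n1}$. Since $\delta\ne0$, they give $v_{st}(\chi_{st}-\chi_{nn})=0$ and $v_{st}(\omega_{st}-\omega_{nn})=0$. I would also record the information coming from the corner terms themselves, namely identities relating $\lambda,p_{n1}$ to $\mu,q_{n1}$. As a sanity check, the projection $\pi_B\circ\Phi|_{\Ccorner}$ is diagonal and is a graded isomorphism by Lemma~\ref{lem:corner-projection}, so these identities should say that the two $2\times2$ corner algebras have matching parameters.

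\textbf{Second step: the surviving terms.} The remaining task, which I expect to be the main obstacle, is to show that no noncorner $(s,t)$ has both $(\chi_{st},\omega_{st})=(\chi_{11},\omega_{11})$ and $u_{st}\ne0$, and similarly for $V$. I would compute $\chi_{st}/\chi_{11}$ and $\omega_{st}/\omega_{11}$ explicitly, splitting by the position of $(s,t)$: first row, first column, last row, last column, or interior. For an interior $y_{st}$, both ratios pick up a factor of $\mu$ relative to $y_{11}$, or the product $\chi\omega$ shifts by $\mu$. This mirrors the computation in Lemma~\ref{lem:corner-projection}, where $y_{11}$ and $y_{ii}$ behave differently against $z_B=y_{1n}y_{n1}$ because $\mu\ne1$. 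I would use $z_B$ in the same way: $\Phi(x_{1n}x_{n1})\in\kk^*z_B$ and $x_{11}z_A=\lambda^{-1}z_Ax_{11}$, so every surviving $y_{st}$ must satisfy $y_{st}z_B=\lambda^{-1}z_By_{st}$. This is a single condition, in terms of $\mu$, that should already exclude most positions.

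\textbf{Third step: fallback for edge positions.} For any edge positions that pass all $q$-commutation tests (for example, only under special parameter coincidences such as $\mu q^2=\pm1$), I would use the nonhomogeneous relation
\[
x_{nn}x_{11}-x_{11}x_{nn}=(\lambda-1)p_{n1}x_{1n}x_{n1}.
\]
Its image has right-hand side in $\kk z_B$. On the left side, the terms $\delta(y_{nn}U-Uy_{nn})$ and $\alpha(y_{11}V-Vy_{11})$, after reordering by \eqref{eq:relations}, contribute basis monomials $y_{st}y_{nn}$ and $y_{11}y_{st}$ with coefficients $\delta u_{st}(r_{st}-1)$ and similar. Here any $(\lambda-1)$-correction terms from \eqref{eq:relations} involve other index pairs. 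Since $\mu\ne1$ and $y_{nn}$ and $y_{11}$ do not commute with the edge generators in the relevant way, these coefficients force $u_{st}=v_{st}=0$. I would check this final coefficient comparison case by case for the four edges, and then conclude $U=V=0$.
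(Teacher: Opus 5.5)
There is a genuine gap. Every test you propose compares only leading coefficients: the $q$-commutation scalars against $y_{1n}$, $y_{n1}$ and $z_B$, and the coefficients of $y_{st}y_{nn}$ and $y_{11}y_{st}$ in the image of the cross relation, where you explicitly set aside the $(\mu-1)$-correction terms. These tests cannot exclude three kinds of terms:
\begin{itemize}
\item the edge terms of $U$ adjacent to $y_{11}$, that is, the first row and first column;
\item the edge terms of $V$ adjacent to $y_{nn}$, that is, the last row and last column;
\item when $n\ge4$, the off-diagonal interior terms.
\end{itemize}
For example, for $1<j<n$ the relations give $y_{1n}y_{1j}=q_{jn}y_{1j}y_{1n}$, $y_{n1}y_{1j}=\mu q_{n1}q_{j1}y_{1j}y_{n1}$ and $y_{nn}y_{1j}=q_{n1}q_{jn}y_{1j}y_{nn}+(\mu-1)q_{n1}y_{1n}y_{nj}$. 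So all your conditions on $u_{1j}$ reduce to $q_{j1}=1$ and $q_{jn}=q_{1n}$. These are compatible with $\mu\ne1$. In particular, the leading scalar $r_{1j}=q_{n1}q_{jn}$ is not tied to $\mu$, contrary to your assertion in the third step.

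Likewise, for interior $(s,t)$ one finds $\chi_{st}\omega_{st}=q_{1s}q_{ns}q_{tn}q_{t1}$. This is identically $1$ only when $s=t$. Your expectation that interior terms differ from $y_{11}$ by a factor of $\mu$ comes from the diagonal computation in Lemma~\ref{lem:corner-projection}, and it does not extend. For $s\ne t$ the conditions $\chi_{st}=\chi_{11}$, $\omega_{st}=\omega_{11}$ and $q_{ns}q_{tn}=1$ involve the five independent parameters $q_{1s},q_{ns},q_{t1},q_{tn},q_{n1}$. They can all hold at once.

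The missing idea is that the correction terms are exactly what eliminates these coefficients. Neither $\Phi(x_{11})$ nor $\Phi(x_{nn})$ involves $y_{1n}$. Hence the monomial $y_{1n}y_{nj}$ can arise in $\Phi(x_{nn})\Phi(x_{11})-\Phi(x_{11})\Phi(x_{nn})$ only from rewriting $\delta u_{1j}\,y_{nn}y_{1j}$. It does not occur on the right side, so $\delta u_{1j}(\mu-1)q_{n1}=0$. The monomials $y_{in}y_{n1}$, $y_{1j}y_{n1}$ and $y_{1n}y_{i1}$ dispose of $u_{i1}$, $v_{nj}$ and $v_{in}$ in the same way.

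Your leading-coefficient tests do suffice for the opposite edges, namely the last row and column for $U$ and the first row and column for $V$. For instance, $\chi_{nj}=\chi_{11}$ forces $q_{jn}=\mu$, while the coefficient of $y_{nj}y_{nn}$ forces $q_{jn}=1$.

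Once all edge terms are removed, the monomials $y_{sn}y_{nt}$ and $y_{1t}y_{s1}$ can arise only from rewriting $y_{nn}y_{st}$ and $y_{st}y_{11}$. Comparing their coefficients kills the interior coefficients of $U$ and $V$. The order matters: before the edges are cleared, products of edge terms can also contribute these monomials. This is exactly the three-step structure of the paper's proof.
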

\begin{proof}
      Let $u_{ij}$ and $v_{ij}$ be the coefficients of $y_{ij}$ in $U$ and $V$, respectively. We prove that $u_{ij}=v_{ij}=0$ for all $i,j$ by comparing coefficients in the standard basis of $B$.

      Applying $\Phi$ to the relations of $x_{11}$ and $x_{nn}$ with $x_{1n}$ and $x_{n1}$, and canceling $\beta$ and $\gamma$, gives
      \begin{equation}\label{eq:lift-normal}
            \begin{aligned}
                  \Phi(x_{11})y_{1n}&=p_{n1}y_{1n}\Phi(x_{11}),   &\Phi(x_{11})y_{n1}=\lambda^{-1}p_{1n}y_{n1}\Phi(x_{11}),\\
                  \Phi(x_{nn})y_{1n}&=\lambda p_{n1}y_{1n}\Phi(x_{nn}),  &\Phi(x_{nn})y_{n1}=p_{1n}y_{n1}\Phi(x_{nn}).
            \end{aligned}
      \end{equation}
      Since $\alpha\ne0$, comparing the coefficients of $y_{11}y_{1n}$ in the first equation and $y_{11}y_{n1}$ in the second gives
      \[
            q_{1n}=p_{1n},\quad \mu q_{n1}=\lambda p_{n1}.
      \]
      Thus $q_{n1}=p_{n1}$ and $\mu=\lambda$.
      Applying $\Phi$ to the relation between $x_{nn}$ and $x_{11}$ gives
      \begin{equation}\label{eq:lift-cross}
            \Phi(x_{nn})\Phi(x_{11})-\Phi(x_{11})\Phi(x_{nn}) =(\lambda-1)p_{n1}\beta\gamma y_{1n}y_{n1}.
      \end{equation}

      \textbf{Step 1: Eliminating the adjacent edge terms.}
      Fix $1<j<n$. We compare the coefficient of $y_{1n}y_{nj}$ in \eqref{eq:lift-cross}.
      Since neither $\Phi(x_{11})$ nor $\Phi(x_{nn})$ contains $y_{1n}$, the defining relations show that $y_{1n}y_{nj}$ can only arise from rewriting the product $y_{nn}y_{1j}$ in standard order.

      Since $y_{nn}$ occurs only in $\Phi(x_{nn})$, with coefficient $\delta$, the product $y_{nn}y_{1j}$ occurs in $\Phi(x_{nn})\Phi(x_{11})$ with coefficient $\delta u_{1j}$. 
      Thus the coefficient of $y_{1n}y_{nj}$ in $\Phi(x_{nn})\Phi(x_{11})$ is $\delta u_{1j}(\lambda-1)p_{n1}$.
      In $\Phi(x_{11})\Phi(x_{nn})$, these two generators occur in the order $y_{1j}y_{nn}$, which is already in standard order. Hence the coefficient of $y_{1n}y_{nj}$ in $\Phi(x_{11})\Phi(x_{nn})$ is zero.
      Since the coefficient of $y_{1n}y_{nj}$ on the right side of \eqref{eq:lift-cross} is zero, we obtain
      \[
            \delta u_{1j}(\lambda-1)p_{n1}=0.
      \]
      Hence $u_{1j}=0$.
      
      Similarly, comparing the coefficients of $y_{in}y_{n1}$, $y_{1j}y_{n1}$ and $y_{1n}y_{i1}$ in \eqref{eq:lift-cross} gives, respectively,
      \[
            \delta u_{i1}(\lambda-1)q_{ni}=0,\quad  \alpha v_{nj}(\lambda-1)p_{n1}=0,\quad \alpha v_{in}(\lambda-1)q_{i1}=0,
      \]
      where $1<i,j<n$. Hence $u_{i1}=v_{nj}=v_{in}=0$.

      \textbf{Step 2: Eliminating the remaining edge terms.}
      Fix $1<i,j<n$. Each of the monomials
      \[
            y_{nj}y_{nn},\quad y_{in}y_{nn},\quad y_{11}y_{1j},\quad y_{11}y_{i1}
      \]
      is a product of two generators from the same row or column.
      Hence no other pair of generators can contribute to its coefficient in \eqref{eq:lift-cross}.
      Comparing the coefficients of these four terms in \eqref{eq:lift-cross} and canceling the nonzero scalars $\delta$ and $\alpha$ gives
      \begin{equation}\label{eq:lift-edge-equations}
            \begin{aligned}
                  u_{nj}(q_{jn}-1)&=0, &u_{in}(\lambda q_{ni}-1)&=0,\\
                  v_{1j}(q_{1j}-1)&=0, &v_{i1}(\lambda q_{i1}-1)&=0.
            \end{aligned}
      \end{equation}

      Note that both $y_{1n}$ and $y_{n1}$ commute with every generator up to a nonzero scalar.
      Comparing the coefficients of $y_{1n}y_{nj}$ and $y_{in}y_{n1}$ in the first two equations of \eqref{eq:lift-normal}, respectively, gives
      \[
            u_{nj}(\lambda q_{nj}-1)=0,\quad u_{in}(1-q_{ni})=0.
      \]
      Comparing the coefficients of $y_{1n}y_{i1}$ and $y_{1j}y_{n1}$ in the last two equations of \eqref{eq:lift-normal}, respectively, gives
      \[
            v_{i1}(q_{i1}-1)=0,\quad v_{1j}(1-\lambda q_{j1})=0.
      \]
      If $u_{nj}\ne0$, the equations above and \eqref{eq:lift-edge-equations} give $q_{jn}=1$ and $\lambda q_{nj}=1$.
      Since $q_{jn}q_{nj}=1$, this implies $\lambda=1$, a contradiction. Hence $u_{nj}=0$.
      The same argument applied to the other three coefficients gives $u_{in}=v_{1j}=v_{i1}=0$.

      \textbf{Step 3: Eliminating interior terms.}
      Now $U$ and $V$ are linear combinations of the interior generators.
      Fix $1<i,j<n$. As in Step 1, the monomial $y_{in}y_{nj}$ in \eqref{eq:lift-cross} can only arise from rewriting $y_{nn}y_{ij}$, which occurs in $\Phi(x_{nn})\Phi(x_{11})$ with coefficient $\delta u_{ij}$.
      Comparing the coefficient of $y_{in}y_{nj}$ in \eqref{eq:lift-cross} gives
      \[
            \delta u_{ij}(\lambda-1)q_{ni}=0.
      \]
      Thus $u_{ij}=0$ for all $1<i,j<n$, so $U=0$.

      Now $\Phi(x_{11})=\alpha y_{11}$.
      Similarly, comparing the coefficient of $y_{1j}y_{i1}$ in \eqref{eq:lift-cross}, which comes from rewriting $y_{ij}y_{11}$, gives
      \[
            \alpha v_{ij}(\lambda-1)q_{i1}=0.
      \]
      Hence $V=0$. This proves the lemma.
\end{proof}

The next lemma extends the conclusion of Lemma~\ref{lem:corner-lift} from the corner generators to all standard generators. It also shows that the source and target parameters coincide.
\begin{lemma}\label{lem:corner-rigidity}
      Let $\Phi:A\to B$ be a graded isomorphism. Suppose
      \[
            \Phi(x_{11})=\alpha y_{11},\quad\Phi(x_{1n})=\beta y_{1n},\quad\Phi(x_{n1})=\gamma y_{n1},\quad\Phi(x_{nn})=\delta y_{nn},
      \]
      where $\alpha,\beta,\gamma,\delta\ne0$. Then
      \[
            \Phi(x_{ij})=c_{ij}y_{ij}, \quad 1\le i,j\le n
      \]
      for nonzero scalars $c_{ij}$. Moreover, $\mu=\lambda$, $\mathbf p=\mathbf q$, and there are $\alpha_i,\beta_j\in\kk^*$ such that
      $c_{ij}=\alpha_i\beta_j$.
\end{lemma}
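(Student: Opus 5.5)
Given diagonal action on the four corners, I will propagate diagonality first to the edge generators and then to the interior, by comparing coefficients in the standard basis of $B$. Once every $\Phi(x_{ij})$ is a scalar multiple of $y_{ij}$, I will compare the defining relations \eqref{eq:relations} of $A$ and $B$ to get $\mu=\lambda$ and $\mathbf p=\mathbf q$. Then $\Phi$ composed with the identification $B=A$ is a diagonal automorphism, so Lemma~\ref{lem:diagonal} gives $c_{ij}=\alpha_i\beta_j$. The equality $\mu=\lambda$, $q_{n1}=p_{n1}$ was already obtained at the start of the proof of Lemma~\ref{lem:corner-lift}, and the same computation applies here.

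\textbf{Edges.} Fix $1<j<n$ and write $\Phi(x_{1j})=\sum a_{st}y_{st}$. I use the cross relation of $x_{1j}$ with $x_{n1}$: since $n>1$ and $1<j$, the first case of \eqref{eq:relations} gives
\[
x_{n1}x_{1j}=\lambda p_{n1}p_{1j}\ \text{-type scalar}\cdot x_{1j}x_{n1},
\]
so $x_{1j}$ $q$-commutes with the normal elements $x_{1n},x_{n1}$. I also use the cross relation $x_{nn}x_{1j}=p_{n1}p_{jn}x_{1j}x_{nn}+(\lambda-1)p_{n1}x_{1n}x_{nj}$, together with the same-row relations $x_{1j}x_{11}=p_{1j}^{-1}\cdots$ and $x_{1n}x_{1j}=\cdots$. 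Applying $\Phi$ and comparing coefficients of monomials $y_{11}y_{st}$ and $y_{st}y_{nn}$ isolates each $a_{st}$ times a factor of the form $(\text{scalar}-1)$. Pairs of such factors cannot vanish simultaneously because $\lambda\ne1$, exactly as in Step~2 of Lemma~\ref{lem:corner-lift}. The most efficient tool is the inhomogeneous term: in $\Phi(x_{nn})\Phi(x_{1j})-\cdots$, a term $y_{st}$ with $s>1$ and $t<n$ produces a monomial $y_{sn}y_{nt}$ or $y_{1t}y_{s1}$ from rewriting against $y_{nn}$ or $y_{11}$, with coefficient containing $(\lambda-1)$. This kills every coefficient except those in row $1$. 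The remaining row-$1$ coefficients $a_{1t}$ are separated using the $q$-commutation with $y_{11}$ and $y_{nn}$. A cleaner option is the relation with $x_{n1}$ together with the filtration bound \eqref{eq:support-bound}, which limits the support to $|s-t|\ge|1-j|$. I expect this to leave $\Phi(x_{1j})=c_{1j}y_{1j}$, and the other three edges follow symmetrically.

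\textbf{Interior.} For $1<i,j<n$, I use the relations of $x_{ij}$ with the now-diagonal edge generators $x_{1j},x_{i1},x_{nj},x_{in}$ and the corners. In particular, $x_{ij}$ with $x_{11}$ gives $x_{ij}x_{11}-p_{i1}p_{1j}x_{11}x_{ij}=(\lambda-1)p_{i1}x_{1j}x_{i1}$. The right side is now $c_{1j}c_{i1}$ times a known monomial. Comparing coefficients of $y_{1t}y_{s1}$ forces $\Phi(x_{ij})$ to have no $y_{st}$ with $(s,t)\ne(i,j)$ among $s,t>1$. Comparing with $x_{nn}$ similarly handles terms with $s,t<n$. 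Edge and corner terms are killed by $q$-commutation with $y_{1n},y_{n1}$ against the row/column relations, as in Lemma~\ref{lem:corner-lift}. Finally, comparing the coefficient of $y_{1j}y_{i1}$ also gives $c_{11}c_{ij}=c_{1j}c_{i1}\ne0$, so $c_{ij}\ne0$.

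\textbf{Parameters.} With $\Phi$ diagonal, applying it to the row relation $x_{1j}x_{1i}$ and to the column relations, and using $\Phi$ injective on basis monomials, gives $q_{ij}=p_{ij}$ and $\mu=\lambda$ directly. Lemma~\ref{lem:diagonal} then applies.

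The main obstacle is the edge step. Unlike the corners, edge generators are not separated by the filtration, and the $q$-commutation scalars of two different edge generators in the same row can coincide. The inhomogeneous $(\lambda-1)$ terms coming from cross relations with the opposite corner are therefore what must carry the argument. I would organize the edge step by the opposite edge, using the relation of $x_{1j}$ with $x_{nt}$ only after the corners are fixed.
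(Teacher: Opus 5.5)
Your outline (edges, then interior, then parameters, then Lemma~\ref{lem:diagonal}) matches the structure of the paper's proof, and your interior and parameter steps can be completed. The gap is the edge step. It carries the lemma, yet you leave it at ``I expect'', and the mechanisms you offer for it do not work.

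\textbf{Where the edge step fails.} Grant that the components of $\Phi(x_{1j})=\sum a_{st}y_{st}$ off the first row and at the corners can be removed by the comparisons you describe. You must still show $a_{1t}=0$ for $1<t<n$, $t\ne j$.
\begin{itemize}
\item The support bound \eqref{eq:support-bound} applied to $\Phi(x_{1j})$ gives only $t\ge j$.
\item For $j<t<n$, quasi-commutation with $y_{11},y_{1n},y_{n1}$, together with the coefficient of $y_{1t}y_{nn}$ in the image of the cross relation with $x_{nn}$, yields only $q_{1t}=p_{1j}$ and $q_{tn}=p_{jn}$ (after using $\mu=\lambda$, $q_{n1}=p_{n1}$). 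Unlike Step~2 of Lemma~\ref{lem:corner-lift}, no pair of these conditions conflicts with $\lambda\ne1$. They hold for every $t$ when, for instance, $\mathbf p=\mathbf q$ is the all-ones matrix. So these comparisons cannot separate $y_{1t}$ from $y_{1j}$.
\item Your closing suggestion, the relations of $x_{1j}$ with $x_{nt}$, brings in the further unknown images $\Phi(x_{nt})$ and $\Phi(x_{1t})$, and you give no way to close that system.
\end{itemize}

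\textbf{The missing idea.} Pair the top edge with the bottom edge in the same column, and apply the support bound to both. Write $\Phi(x_{nj})=\sum b_{st}y_{st}$. The image of $x_{nn}x_{1j}=p_{n1}p_{jn}x_{1j}x_{nn}+(\lambda-1)p_{n1}x_{1n}x_{nj}$ is
\[
\delta\bigl(y_{nn}\Phi(x_{1j})-p_{n1}p_{jn}\Phi(x_{1j})y_{nn}\bigr)=(\lambda-1)p_{n1}\beta\,y_{1n}\Phi(x_{nj}).
\]
For $1<t<n$, the monomial $y_{1n}y_{nt}$ arises on the left only from rewriting $y_{nn}y_{1t}$, and on the right only from $b_{nt}y_{nt}$. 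With $\mu=\lambda$ and $q_{n1}=p_{n1}$ this gives $\delta a_{1t}=\beta b_{nt}$. Now $a_{1t}\ne0$ forces $t-1\ge j-1$ by \eqref{eq:support-bound} for $x_{1j}$. Likewise $b_{nt}\ne0$ forces $n-t\ge n-j$ by \eqref{eq:support-bound} for $x_{nj}$. Hence $t=j$. This settles the top-row components of $\Phi(x_{1j})$ and the bottom-row components of $\Phi(x_{nj})$ at once. The left and right edges then follow by conjugating with the standard transposition isomorphisms \eqref{eq:transpose-iso}.

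\textbf{The interior step.} Once the edges are diagonal, your argument goes through: derive $\mathbf p=\mathbf q$ from the first-row relations, and note that the leftover $y_{1n}$ and $y_{n1}$ components of $\Phi(x_{ij})$ each need two quasi-commutation relations to force $\lambda=1$. The paper's route is shorter. The image of $x_{nj}x_{i1}-p_{ni}p_{1j}x_{i1}x_{nj}=(\lambda-1)p_{ni}x_{ij}x_{n1}$ expresses $\Phi(x_{ij})y_{n1}$ as a scalar multiple of $y_{ij}y_{n1}$, and one cancels $y_{n1}$ in the domain $B$.
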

\begin{proof}
      As in Lemma~\ref{lem:corner-lift}, comparing the corner relations gives
      $\mu=\lambda$ and $q_{n1}=p_{n1}$.

      \textbf{Step 1: Top and bottom edges.}
      Fix $1<j<n$ and write
      \[
            \Phi(x_{1j})=\sum_{s,t}a_{st}y_{st},\quad \Phi(x_{nj})=\sum_{s,t}b_{st}y_{st}.
      \]
      Applying $\Phi$ to $x_{1j}x_{11}=p_{1j}x_{11}x_{1j}$ and canceling $\alpha$ gives
      \[
            \Phi(x_{1j})y_{11}=p_{1j}y_{11}\Phi(x_{1j}).
      \]
      For $1<s,t\le n$, the defining relations give
      \[
            y_{st}y_{11} =q_{s1}q_{1t}y_{11}y_{st} +(\lambda-1)q_{s1}y_{1t}y_{s1}.
      \]
      The monomial $y_{1t}y_{s1}$ can only arise from rewriting $a_{st}y_{st}y_{11}$.
      Comparing its coefficient gives
      \[
            a_{st}(\lambda-1)q_{s1}=0.
      \]
      Hence $a_{st}=0$ for $1<s,t\le n$.
      By Lemma~\ref{lem:filtration}(2), the coefficient of $y_{11}$ is also zero.
      Thus $\Phi(x_{1j})$ is a linear combination of generators in the first row and first column, other than $y_{11}$.
      Applying the same argument to
      \[
            y_{nn}\Phi(x_{nj})=p_{jn}\Phi(x_{nj})y_{nn}
      \]
      shows that $\Phi(x_{nj})$ is a linear combination of generators in the last row and last column, other than $y_{nn}$.

      Applying $\Phi$ to the relations between $x_{nn}$ and $x_{1j}$, and between $x_{nj}$ and $x_{11}$, gives
      \begin{align}
            \delta(y_{nn}\Phi(x_{1j}) -p_{n1}p_{jn}\Phi(x_{1j})y_{nn}) &=(\lambda-1)p_{n1}\beta y_{1n}\Phi(x_{nj}),
            \label{eq:edge-top}\\
            \alpha(\Phi(x_{nj})y_{11} -p_{n1}p_{1j}y_{11}\Phi(x_{nj})) &=(\lambda-1)p_{n1}\gamma\Phi(x_{1j})y_{n1}.
            \label{eq:edge-bottom}
      \end{align}
      Fix $1<s<n$. Every basis monomial on the right side of \eqref{eq:edge-top} contains $y_{1n}$, so the coefficient of $y_{sn}y_{n1}$ is zero.
      Similarly, the coefficient of $y_{1n}y_{s1}$ on the right side of \eqref{eq:edge-bottom} is zero.
      Comparing these coefficients gives
      \[
            \delta a_{s1}(\lambda-1)q_{ns}=0,\quad
            \alpha b_{sn}(\lambda-1)q_{s1}=0.
      \]
      Hence $a_{s1}=b_{sn}=0$. We now have
      \[
            \begin{aligned}
                  \Phi(x_{1j}) &=\sum_{t=2}^{n-1}a_{1t}y_{1t} +a_{1n}y_{1n}+a_{n1}y_{n1},\\
                  \Phi(x_{nj}) &=\sum_{t=2}^{n-1}b_{nt}y_{nt} +b_{1n}y_{1n}+b_{n1}y_{n1}.
            \end{aligned}
      \]

      Neither $y_{1n}^2$ nor $y_{1n}y_{n1}$ occurs on the left side of \eqref{eq:edge-top}.
      Comparing their coefficients gives $b_{1n}=b_{n1}=0$.
      Similarly, comparing the coefficients of $y_{1n}y_{n1}$ and $y_{n1}^2$ in \eqref{eq:edge-bottom} gives $a_{1n}=a_{n1}=0$.

      For $1<t<n$, comparing the coefficient of $y_{1n}y_{nt}$ in \eqref{eq:edge-top} gives
      \begin{equation}\label{eq:paired-support}
            \delta a_{1t}=\beta b_{nt}.
      \end{equation}
      If $a_{1t}\ne0$, then $b_{nt}\ne0$.
      Applying \eqref{eq:support-bound} in Lemma~\ref{lem:filtration}(2) to these two coefficients gives
      \[
            t-1\ge j-1,\quad n-t\ge n-j.
      \]
      Thus $t=j$.
      Equation~\eqref{eq:paired-support} now gives $a_{1t}=b_{nt}=0$ for all $t\ne j$. Therefore
      \[
            \Phi(x_{1j})\in\kk^* y_{1j},\quad  \Phi(x_{nj})\in \kk^* y_{nj}.
      \]

      \textbf{Step 2: Left and right edges.}
      Let $\Phi_{\tau,A}:A\to A_\tau$ and $\Phi_{\tau,B}:B\to B_\tau$ be the standard isomorphisms in \eqref{eq:transpose-iso}.
      The graded isomorphism
      \[
            \widetilde\Phi =\Phi_{\tau,B}\circ\Phi\circ\Phi_{\tau,A}^{-1}:A_\tau\longrightarrow B_\tau
      \]
      maps each corner generator to a nonzero scalar multiple of the corner generator in the same position.
      Both $A_\tau$ and $B_\tau$ have deformation parameter $\lambda\ne1$, so Step~1 applies to $\widetilde\Phi$.
      Since $\Phi= \Phi_{\tau,B}^{-1}\circ\widetilde\Phi\circ\Phi_{\tau,A}$, we obtain
      \[
            \Phi(x_{i1})\in\kk^* y_{i1},\quad  \Phi(x_{in})\in \kk^* y_{in}, \quad 1<i<n.
      \]

      \textbf{Step 3: Interior generators.}
      For the corner and edge generators, write $\Phi(x_{ij})=c_{ij}y_{ij}$ with $c_{ij}\ne0$. Applying $\Phi$ to
      \[
            x_{1l}x_{1j}=p_{jl}x_{1j}x_{1l}, \quad 1\le j<l\le n
      \]
      and comparing with the corresponding relations in $B$ gives $p_{jl}=q_{jl}$. By multiplicative antisymmetry, $\mathbf p=\mathbf q$.

      For $1<i,j<n$, applying $\Phi$ to
      \[
            x_{nj}x_{i1}-p_{ni}p_{1j}x_{i1}x_{nj} =(\lambda-1)p_{ni}x_{ij}x_{n1}
      \]
      now gives
      \[
            c_{nj}c_{i1}(\lambda-1)p_{ni}y_{ij}y_{n1}=(\lambda-1)p_{ni}c_{n1}\Phi(x_{ij})y_{n1}.
      \]
      Since $(\lambda-1)p_{ni}c_{n1}\ne0$, one gets
      \[
            \Phi(x_{ij})=\frac{c_{nj}c_{i1}}{c_{n1}}y_{ij}.
      \]
      Thus every standard generator is mapped to a nonzero scalar multiple of the generator in the same position.
      Identifying $B$ with $A$, Lemma~\ref{lem:diagonal} gives $c_{ij}=\alpha_i\beta_j$ for some $\alpha_i,\beta_j\in\kk^*$.
\end{proof}


Now we can complete the proof of Theorem~\ref{thm:main}.
\begin{proof}[\bf Proof of Theorem~\ref{thm:main}]
      The case $n=2$ is Proposition~\ref{prop:two}. Assume $n\ge3$ and let $\phi\in\Autgr(A)$. By Lemma~\ref{lem:corner-projection}, applied with $B=A$, the map $\psi=\pi\circ\phi|_{\Ccorner}$ is an automorphism of the corner subalgebra. Proposition~\ref{prop:two} shows that $\psi$ is a torus automorphism, possibly composed with transposition or reversal.

      Choose a graded isomorphism $S:A\to B$ as follows. If $\psi$ is diagonal, take $B=A$ and $S={\rm id}_A$. In the transpose case, take $S=\Phi_\tau$ from \eqref{eq:transpose-iso}; in the reversal case, take $S=\Phi_\sigma$ from \eqref{eq:reversal-iso}. Write $B=\Mat{\mu}{\mathbf q}$ with standard generators $\{y_{ij}\}$, and let $\pi_B$ be its corner projection. Put $\Phi=S\circ\phi$.
      Since $S$ maps corner generators to corner generators and noncorner generators to noncorner generators, we have
      \[
            \pi_B\circ\Phi|_{\Ccorner}=(S|_{\Ccorner})\circ\psi.
      \]
      By the choice of $S$, this map sends each corner generator to a nonzero scalar multiple of the target corner generator in the same position.

      Lemma~\ref{lem:filtration}(3) shows that $\Phi(x_{1n})$ and $\Phi(x_{n1})$ contain no noncorner terms. Hence the hypotheses of Lemma~\ref{lem:corner-lift} hold, and that lemma removes the noncorner terms from $\Phi(x_{11})$ and $\Phi(x_{nn})$. 
      We can now apply Lemma~\ref{lem:corner-rigidity}, which gives
      \[
            \mu=\lambda,\quad \mathbf q=\mathbf p,\quad\Phi(x_{ij})=\alpha_i\beta_jy_{ij}, \quad 1\le i,j\le n
      \]
      for some $\alpha_i,\beta_j\in\kk^*$. Identifying $B$ with $A$, we obtain $\Phi\in\HH$.

      If $S={\rm id}_A$, then $\phi=\Phi\in\HH$.

      Suppose $S=\Phi_\tau$. By \eqref{eq:transpose-iso},
      the target parameter matrix satisfies
      \[
            q_{ij}=\lambda^{-1}p_{ji},\quad i>j.
      \]
      Since $\mathbf q=\mathbf p$, we obtain $p_{ij}=\lambda^{-1}p_{ji}$ for all $i>j$.
      Using $p_{ij}p_{ji}=1$, this gives
      \[
            \lambda p_{ij}^2=1, \quad i>j.
      \]
      Thus $\tau$ is an automorphism of $A$ by Proposition~\ref{prop:symmetries}.
      After identifying $B$ with $A$, the map $S$ is precisely $\tau$.
      Since $\tau^{-1}=\tau$, we have
      \[
            \phi=S^{-1}\circ\Phi=\tau\circ\Phi\in\langle\HH,\tau\rangle.
      \]

      Suppose $S=\Phi_\sigma$. By \eqref{eq:reversal-iso}, the target parameters satisfy
      \[
            \mu=\lambda^{-1},\quad q_{ij}=p_{\bar{i},\bar{j}}, \quad i>j.
      \]
      Combining these equalities with $\mu=\lambda$ and $\mathbf q=\mathbf p$ gives
      \[
            \lambda^2=1,\quad p_{ij}=p_{\bar{i},\bar{j}}, \quad i>j.
      \]
      Since $\lambda\ne1$, we obtain $\lambda=-1$.
      These are exactly the conditions for $\sigma$ to be an automorphism of $A$ in Proposition~\ref{prop:symmetries}.
      After identifying $B$ with $A$, we therefore have $S=\sigma$. Since $\sigma^{-1}=\sigma$, it follows that
      \[
            \phi=S^{-1}\circ\Phi=\sigma\circ\Phi\in\langle\HH,\sigma\rangle.
      \]
      In particular, if neither of these parameter conditions holds, then $S={\rm id}_A$ and $\phi\in\HH$.

      Conversely, Proposition~\ref{prop:symmetries} shows that the indicated involutions are automorphisms under the stated parameter conditions, and that the two cases are disjoint. 
      Each indicated involution normalizes $\HH$. Since it is a nontrivial permutation of the generators, the subgroup it generates intersects $\HH$ trivially. This proves the semidirect-product decompositions.
\end{proof}

The preceding arguments also extend the isomorphism criterion in \cite[Theorem~5.8]{Gaddis2017} to the case in which both deformation parameters are $-1$.

\begin{corollary}\label{cor:minus-one-isomorphisms}
      Suppose $\operatorname{char}\kk\ne2$. Then
      \[
            \mathcal O_{-1,\mathbf p}(M_n(\kk)) \cong \mathcal O_{-1,\mathbf q}(M_m(\kk))
      \]
      if and only if $n=m$ and one of the following conditions holds:
      \begin{enumerate}
            \item $q_{ij}=p_{ij}$ for all $i>j$;
            \item $q_{ij}=-p_{ji}$ for all $i>j$;
            \item $q_{ij}=p_{\bar{i},\bar{j}}$ for all $i>j$;
            \item $q_{ij}=-p_{\bar{j},\bar{i}}$ for all $i>j$.
      \end{enumerate}
\end{corollary}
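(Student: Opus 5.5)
Both directions will be argued with the machinery already developed for graded isomorphisms $\Phi:A\to B$ in Sections~\ref{sec:corners}--\ref{sec:classification}.

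\textbf{Sufficiency.} Take $n=m$. Condition (1) is the identity. Condition (2) is the transpose isomorphism \eqref{eq:transpose-iso}: with $\lambda=-1$ we have $\lambda^{-1}p_{ji}=-p_{ji}$. Condition (3) is the reversal isomorphism \eqref{eq:reversal-iso}, since $\lambda^{-1}=-1=\lambda$. Condition (4) comes from composing the two. First apply $\Phi_\sigma$, which gives parameters $p'_{ij}=p_{\bar i,\bar j}$. Then apply $\Phi_\tau$, which gives $q_{ij}=-p'_{ji}=-p_{\bar j,\bar i}$. Each index pair stays in the correct range: $i>j$ forces $\bar j>\bar i$.

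\textbf{Necessity.} We use the standard fact that the algebras are graded. An isomorphism between connected graded algebras generated in degree one need not be graded a priori. I plan to reduce to the graded case by taking the associated graded map with respect to the augmentation ideal (or the filtration by $A_{\ge d}$), as Gaddis does in \cite{Gaddis2017}. An isomorphism of these domains preserves the augmentation ideal provided it preserves the set of points, i.e.\ it sends the maximal ideal $A_{\ge1}$ to $B_{\ge1}$. Cleaner still, I would simply cite \cite[Theorem~5.8]{Gaddis2017} and its setup, which already handles this reduction and the equality $n=m$ (for instance via GK dimension $n^2=m^2$). So assume $\Phi$ is graded and $n=m$.

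Next run the proof of Theorem~\ref{thm:main} with $B$ in place of $A$. By Lemma~\ref{lem:corner-projection}, $\psi=\pi_B\circ\Phi|_{\Ccorner}$ is a graded isomorphism between $2\times2$ quantum matrix algebras with parameters $(-1,p_{n1})$ and $(-1,q_{n1})$. We need the analogue of Proposition~\ref{prop:two} for isomorphisms rather than automorphisms. By Lemma~\ref{lem:filtration}(3), $\psi$ either preserves or swaps $\kk x_{12}$ and $\kk x_{21}$.
\begin{itemize}
\item If it preserves them, the proof of Lemma~\ref{lem:two-fixed} goes through verbatim to make $\psi$ diagonal. Parameter equality is not needed there, only $\lambda\ne1$ and comparison of coefficients.
\item If it swaps them, the computation in Case~2 gives either $\lambda p^2$ compatible with the transpose, or the reversal-type conditions.
\end{itemize}
In the swap case, compose with $\Phi_\tau$ or $\Phi_\sigma$ (or both) on the target side, as in the proof of Theorem~\ref{thm:main}. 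This makes the corner projection diagonal, and the target algebra is still of the form $\mathcal O_{\mu,\mathbf q'}$ with $\mu=-1$, since $\lambda^{-1}=\lambda$.

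Lemmas~\ref{lem:corner-lift} and~\ref{lem:corner-rigidity} are stated for arbitrary graded isomorphisms $A\to B$ with $\mu\ne1$. They then give $\mathbf q'=\mathbf p$. Unwinding which standard isomorphism was composed yields exactly one of (1)--(4). The hypothesis $\operatorname{char}\kk\ne2$ ensures $-1\ne1$, so all the lemmas apply.

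\textbf{Main obstacle.} The hardest step is the $2\times2$ swap case for isomorphisms. In Proposition~\ref{prop:two} the parameter $p$ is the same on both sides; here $p_{n1}$ and $q_{n1}$ may differ. I would redo the Case~2 computation with source $p$ and target $q$. Applying $\psi$ to $x_{21}x_{12}=\lambda p^2x_{12}x_{21}$ gives $\lambda p^2=(\lambda q^2)^{-1}$, which becomes $p^2q^2=1$ when $\lambda=-1$. One then checks that either $\Phi_\tau$ or $\Phi_\sigma$ applied first yields a target whose corner map preserves the lines. This is simple here because $\lambda=-1$ makes both standard isomorphisms keep the deformation parameter fixed, so the reduction never leaves the class of algebras under consideration.
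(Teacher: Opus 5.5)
\textbf{There is a genuine gap, and it sits in the step you treat as routine.} Your sufficiency argument is fine. In the necessity argument, however, you assert that when $\psi$ preserves $\kk x_{12}$ and $\kk x_{21}$, the proof of Lemma~\ref{lem:two-fixed} makes $\psi$ diagonal without using equality of parameters. That is false.

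Take source parameter $p$ and target parameter $q$, and write $\psi(x_{11})=\sum a_{st}y_{st}$ and $\psi(x_{22})=\sum d_{st}y_{st}$. The first coefficient comparisons in that proof then become
\[
a_{22}\bigl(1-\lambda qp^{-1}\bigr)=0,\qquad d_{11}\bigl(1-\lambda pq^{-1}\bigr)=0 .
\]
Both are vacuous when $q=\lambda^{-1}p=-p$. This case really occurs. The composite $\Phi_\sigma\circ\Phi_\tau:\mathcal O_{-1,p}(M_2(\kk))\to\mathcal O_{-1,-p}(M_2(\kk))$ sends $x_{12}\mapsto y_{12}$ and $x_{21}\mapsto y_{21}$, but it also sends $x_{11}\mapsto y_{22}$ and $x_{22}\mapsto y_{11}$. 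So for isomorphisms, preserving the off-diagonal lines does not imply being diagonal. Your case split has no way to detect when both standard isomorphisms must be composed, and that is exactly where condition~(4) comes from. (Your ``or both'' is attached to the swap case, where it is not needed.)

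\textbf{The missing idea is a dichotomy on the diagonal corners.} Conversely, the swap case you call the main obstacle is trivial here. Composing with $\Phi_\tau$ requires no parameter condition when the target is allowed to change. So the Case~2 computations of Proposition~\ref{prop:two}, which also assume equal parameters, are unnecessary. The argument then runs as follows.
\begin{enumerate}
\item Compose with $\Phi_\tau$ to get $\Phi(x_{1n})\in\kk^*y_{1n}$ and $\Phi(x_{n1})\in\kk^*y_{n1}$.
\item By Lemma~\ref{lem:corner-projection}, $y_{11}$ occurs in $\Phi(x_{11})$ or in $\Phi(x_{nn})$. Composing with $\Phi_\sigma\circ\Phi_\tau$ if necessary, which preserves the two off-diagonal corner lines and swaps the diagonal corners, you may assume it occurs in $\Phi(x_{11})$.
\item Compare the coefficient of $y_{11}y_{1n}$ in the image of $x_{1n}x_{11}=p_{1n}x_{11}x_{1n}$. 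This gives $q_{1n}=p_{1n}$, so the two corner subalgebras have identical relations.
\item Lemma~\ref{lem:two-fixed} now applies to an honest automorphism, and Lemmas~\ref{lem:corner-lift} and~\ref{lem:corner-rigidity} give $\mathbf p=\mathbf q$.
\end{enumerate}
This is the paper's route.

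\textbf{The reduction to the graded case also needs a different tool.} Your augmentation-ideal remark is not an argument, since an ungraded isomorphism need not map $A_{\ge1}$ into $B_{\ge1}$ a priori. The paper instead uses Bell--Zhang \cite[Theorem~0.1]{BZ2017}. It yields a graded isomorphism between any two isomorphic connected graded algebras that are finitely generated in degree one.
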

\begin{proof}
      The sufficiency follows from the standard isomorphisms and their compositions.
      Conversely, let
      \[
            A=\mathcal O_{-1,\mathbf p}(M_n(\kk)),\quad B=\mathcal O_{-1,\mathbf q}(M_m(\kk)),
      \]
      and suppose $A\cong B$. Comparing GK dimensions gives $n=m$. By \cite[Theorem~0.1]{BZ2017}, there is a graded isomorphism $\Phi:A\to B$.
      In the following, we may replace $A$ by an isomorphic quantum matrix algebra using the standard isomorphisms. We keep the same notation $A$, $\mathbf p$, and $x_{ij}$.

      By Lemma~\ref{lem:filtration}(3), after composing $\Phi$ on the right with standard transposition if necessary, we may assume
      \[
            \Phi(x_{1n})\in\kk^*y_{1n},\quad \Phi(x_{n1})\in\kk^*y_{n1}.
      \]
      By Lemma~\ref{lem:corner-projection}, the corner projection is an isomorphism, so $y_{11}$ has a nonzero coefficient in $\Phi(x_{11})$ or $\Phi(x_{nn})$.       
      Composing $\Phi$ on the right with standard transposition and reversal if necessary, we may assume that the coefficient of $y_{11}$ in $\Phi(x_{11})$ is nonzero. We show that $\mathbf p=\mathbf q$ in this case.
     
      Comparing the coefficient of $y_{11}y_{1n}$ in the image of $x_{1n}x_{11}=p_{1n}x_{11}x_{1n}$ gives $q_{1n}=p_{1n}$. The corner subalgebras therefore have the same defining relations. 
      Identifying corresponding corner generators, the corner projection becomes a graded automorphism that preserves the subspaces $\kk x_{1n}$ and $\kk x_{n1}$. 
      Lemma~\ref{lem:two-fixed} shows that this automorphism is diagonal. Lemmas~\ref{lem:corner-lift} and~\ref{lem:corner-rigidity} then give $\mathbf p=\mathbf q$. The original parameter matrices therefore satisfy one of conditions~(1)--(4), by \eqref{eq:transpose-iso} and \eqref{eq:reversal-iso}.
\end{proof}

\section{From graded to full automorphisms}\label{sec:full}

We first show that every automorphism of $A$ is a product of a unipotent automorphism and a graded automorphism. Here we only assume $\lambda\ne1$. Let
\[
    {\rm UAut}(A)=\{\phi\in\Aut(A):\phi(x_{ij})-x_{ij}\in A_{\ge2}\text{ for all }i,j\}.
\]
Since $A$ is generated in degree one, this is the group of unipotent automorphisms defined in \cite[Definition~4.2]{GY2015}.

For each standard generator $x_{ij}$, we can choose another standard generator $x_{st}$ such that $x_{ij}x_{st}=c x_{st}x_{ij}$ with $c\ne1$. If $i\ne j$, take $x_{ii}$ when $p_{ij}\ne1$. When $p_{ij}=1$, take $x_{jj}$; then $c$ is $\lambda$ or $\lambda^{-1}$.
For a diagonal generator $x_{ii}$, choose $r\ne i$. Take $x_{ir}$ when $p_{ir}\ne1$. When $p_{ir}=1$, take $x_{ri}$; again, $c$ is $\lambda$ or $\lambda^{-1}$. Since $\lambda\ne1$, all these choices give $c\ne1$.
By \cite[Proposition~4.2]{LL2007}, every $\phi\in\Aut(A)$ therefore satisfies
\[
    \phi(A_d)\subseteq A_{\ge d},\quad d\ge0.
\]
By \cite[Lemma~4.8]{GY2015}, every automorphism of $A$ is a product of a unipotent automorphism and a graded automorphism. The subgroup $\Autgr(A)$ normalizes ${\rm UAut}(A)$, and their intersection is trivial. Hence
\begin{equation}\label{eq:full-split}
    \Aut(A)={\rm UAut}(A)\rtimes\Autgr(A).
\end{equation}

Goodearl and Yakimov proved a rigidity theorem for unipotent automorphisms of symmetric saturated CGL extensions. They applied it to give a new proof of the classification of automorphism groups of single-parameter quantum matrix algebras when $q$ is not a root of unity \cite[Theorem~4.3 and Section~4C]{GY2015}.

When $\lambda$ is not a root of unity, $A$ is a symmetric CGL extension with the generators $x_{ij}$ adjoined in lexicographic order. Its standard grading is induced by the natural torus action; see \cite[Section~5.6]{GL2000} and \cite[Example~5.8]{GY2018}. Its saturation condition can be written in terms of the parameters as follows.
Put
\[
    c_{(l,m),(i,j)}=
    \begin{cases}
        p_{li}p_{jm},&l\ge i,\ m> j,\\
        \lambda p_{li}p_{jm},&l>i,\ m\le j,
    \end{cases}
\]
and set $c_{(i,j),(l,m)}=c_{(l,m),(i,j)}^{-1}$ and $c_{(i,j),(i,j)}=1$. These are the commutation coefficients in this presentation. Let
\[
    L=\left\{a=(a_{ij})\in\mathbb Z^{n^2}:\prod_{i,j=1}^n c_{(l,m),(i,j)}^{a_{ij}}=1\text{ for all }l,m\right\}.
\]
By \cite[Definition~2.9]{GY2015}, this presentation is saturated if and only if $ra\in L$ implies $a\in L$ for every $a\in\mathbb Z^{n^2}$ and every positive integer $r$.

The following proposition applies the rigidity theorem to $A$.

\begin{proposition}\label{prop:full-saturated}
      Suppose that $\lambda$ is not a root of unity and that the CGL extension presentation of $A$ is saturated. Then every automorphism of $A$ is graded.
\end{proposition}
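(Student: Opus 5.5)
By the splitting \eqref{eq:full-split}, $\Aut(A)={\rm UAut}(A)\rtimes\Autgr(A)$. It therefore suffices to show that ${\rm UAut}(A)$ is trivial: then every automorphism lies in $\Autgr(A)$. The plan is to obtain this triviality from the rigidity theorem of Goodearl and Yakimov \cite[Theorem~4.3]{GY2015}. That theorem says that for a symmetric saturated CGL extension, every unipotent automorphism (with respect to the grading induced by the torus action) is the identity, possibly after composing with a specific unipotent ``inner'' correction. I expect the version we need states that ${\rm UAut}$ is trivial outright in the saturated symmetric setting.

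The steps, in order, are as follows.

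First, we record that when $\lambda$ is not a root of unity, $A$ is a symmetric CGL extension for the lexicographic order on the $x_{ij}$, with the natural action of $(\kk^*)^{2n}$ (equivalently, of $\HH$). Here we invoke \cite[Section~5.6]{GL2000} and \cite[Example~5.8]{GY2018}. The hypothesis that $\lambda$ is not a root of unity is exactly what makes the CGL eigenvalue condition hold: the skew-derivations have leading eigenvalues $\lambda^{\pm1}$ or their products, which must be non-roots of unity. Second, we check that the grading used in \cite[Definition~4.2]{GY2015}, namely the one induced by the torus element acting by scalar $t$ on every generator, coincides with the standard $\mathbb N_0$-grading. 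Consequently ${\rm UAut}(A)$ as defined above is exactly their group of unipotent automorphisms; the paper already notes this because $A$ is generated in degree one. Third, the saturation hypothesis is assumed, and it is precisely the lattice condition on $L$ from \cite[Definition~2.9]{GY2015} written out above. With these in hand, \cite[Theorem~4.3]{GY2015} gives ${\rm UAut}(A)=\{{\rm id}\}$.

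Finally, we combine the pieces. Any $\phi\in\Aut(A)$ factors as $\phi=u\circ g$ with $u\in{\rm UAut}(A)$ and $g\in\Autgr(A)$, and $u={\rm id}$, so $\phi$ is graded.

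The main obstacle is matching hypotheses exactly. Theorem~4.3 of \cite{GY2015} may be stated with a conclusion of the form ``$\phi$ is the identity on generators up to elements of a specified form.'' It may also require the CGL presentation to be normalized, for instance with the leading coefficients $\lambda_k$ not roots of unity and with the interval-subalgebra symmetry condition verified. I would first check that the reverse-order presentation (adjoining the $x_{ij}$ in reverse lexicographic order) is also CGL, which gives the symmetry condition. I would then confirm that the torus-element producing the standard grading is among those allowed in the rigidity statement. If the theorem only yields that a unipotent automorphism fixes the generators modulo terms that are themselves forced to vanish by degree considerations, I would close that gap using the $\HH$-eigenvector decomposition: unipotent corrections must be homogeneous for the $\mathbb Z^{2n}$-grading and of degree at least $2$. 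I would then compare with the commutation coefficients $c_{(l,m),(i,j)}$, using saturation to rule out the nontrivial ones.
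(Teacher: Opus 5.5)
Your reduction to ${\rm UAut}(A)=\{{\rm id}_A\}$ via \eqref{eq:full-split} matches the paper. So do your checks that $A$ is a symmetric CGL extension whose torus-induced grading is the standard one. The gap is in what you expect \cite[Theorem~4.3]{GY2015} to say. It does not assert that unipotent automorphisms of a symmetric saturated connected graded CGL extension are trivial. It asserts only that they are the identity on the \emph{core} $C(A)$, a subalgebra defined in \cite[Section~4A]{GY2015} in terms of which standard generators are prime and how they enter the relations among the non-prime ones.

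No blanket triviality statement can hold. Take $\kk[x,y]$ with the diagonal action of $(\kk^*)^2$. It is a symmetric CGL extension with all commutation scalars equal to $1$, so saturation is automatic, and its torus grading is the standard one. Yet $x\mapsto x$, $y\mapsto y+x^2$ is a nontrivial unipotent automorphism. The same example defeats your fallback. Saturation of $L$ gives no obstruction to such corrections. Also, a unipotent automorphism need not commute with $\HH$, so there is no reason for $\phi(x_{ij})-x_{ij}$ to be homogeneous for the $\mathbb Z^{2n}$-grading.

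The missing step is the algebra-specific verification $C(A)=A$, which is the actual content of the paper's proof. By \cite[Proposition~2.7]{GY2015} and \eqref{eq:relations}, the only prime standard generators are the two normal ones, $x_{1n}$ and $x_{n1}$. Both occur on the right side of the relation $x_{nn}x_{11}-x_{11}x_{nn}=(\lambda-1)p_{n1}x_{1n}x_{n1}$ between the non-prime generators $x_{11}$ and $x_{nn}$; this uses $\lambda\ne1$. By the definition of the core, this places them in $C(A)$, so $C(A)=A$. Only after this does \cite[Theorem~4.3]{GY2015} give ${\rm UAut}(A)=\{{\rm id}_A\}$, and hence $\Aut(A)=\Autgr(A)$.
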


\begin{proof}
      By \cite[Proposition~2.7]{GY2015} and the defining relations, the only standard generators that are prime are $x_{1n}$ and $x_{n1}$. Both occur on the right side of
      \[
            x_{nn}x_{11}-x_{11}x_{nn}=(\lambda-1)p_{n1}x_{1n}x_{n1}.
      \]
      Since $x_{11}$ and $x_{nn}$ are not prime, the definition of the core in \cite[Section~4A]{GY2015} gives $C(A)=A$. Hence \cite[Theorem~4.3]{GY2015} gives ${\rm UAut}(A)=\{{\rm id}_A\}$. Equation~\eqref{eq:full-split} now gives $\Aut(A)=\Autgr(A)$.
\end{proof}

Put $\Gamma=\langle\lambda,p_{ij}:1\le j<i\le n\rangle\subseteq\kk^*$. If $\Gamma$ is torsion-free, then $\lambda$ is not a root of unity because $\lambda\ne1$, and the CGL extension presentation of $A$ is saturated; see \cite[Definition~2.9]{GY2015}. Proposition~\ref{prop:full-saturated} and Theorem~\ref{thm:main} therefore give the following corollary.
\begin{corollary}\label{cor:full}
    Suppose $\lambda\ne1$ and $\Gamma$ is torsion-free. Then every automorphism of $A$ is graded, and
    \[
        \Aut(A)=\Autgr(A)=
        \begin{cases}
            \HH\rtimes\langle\tau\rangle, & \lambda p_{ij}^{2}=1\quad\text{for all }i>j,\\
            \HH, & \text{otherwise}.
        \end{cases}
    \]
\end{corollary}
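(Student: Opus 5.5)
The plan is to deduce Corollary~\ref{cor:full} from Proposition~\ref{prop:full-saturated} and Theorem~\ref{thm:main}. There are three things to establish: the hypotheses of Proposition~\ref{prop:full-saturated} hold when $\Gamma$ is torsion-free; the reversal case of Theorem~\ref{thm:main} cannot occur; and the surviving cases give the displayed formula.

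\textbf{Hypotheses of Proposition~\ref{prop:full-saturated}.} Since $\lambda\in\Gamma$, $\lambda\ne1$ and $\Gamma$ is torsion-free, $\lambda$ is not a root of unity. For saturation, I use the fact that every commutation coefficient $c_{(l,m),(i,j)}$ lies in $\Gamma$. For fixed $(l,m)$, the map
\[
a\longmapsto\prod_{i,j}c_{(l,m),(i,j)}^{a_{ij}}
\]
is a group homomorphism $\mathbb Z^{n^2}\to\Gamma$. If $ra\in L$, then for each $(l,m)$ the element $g=\prod_{i,j}c_{(l,m),(i,j)}^{a_{ij}}\in\Gamma$ satisfies $g^r=1$. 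Torsion-freeness forces $g=1$, so $a\in L$. Thus the presentation is saturated, and Proposition~\ref{prop:full-saturated} gives $\Aut(A)=\Autgr(A)$.

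\textbf{Excluding the reversal case.} The second case of Theorem~\ref{thm:main} requires $\lambda=-1$. Then $\lambda$ is an element of order dividing $2$ in $\Gamma$, and it is not $1$, so it has order exactly $2$. This contradicts torsion-freeness (in characteristic $2$ we would have $-1=1=\lambda$, which is already excluded). Hence only the first and third cases of Theorem~\ref{thm:main} remain, and these give exactly the displayed formula.

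No step is a real obstacle. The one point needing care is the saturation check: $L$ is the common kernel of homomorphisms into the torsion-free group $\Gamma$, so $\mathbb Z^{n^2}/L$ embeds into a product of copies of $\Gamma$ and is therefore torsion-free.
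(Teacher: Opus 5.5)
Your proposal is correct and follows the paper's argument. Torsion-freeness of $\Gamma$ makes $\lambda$ not a root of unity and the presentation saturated, so Proposition~\ref{prop:full-saturated} gives $\Aut(A)=\Autgr(A)$, and Theorem~\ref{thm:main} supplies the formula. You also make explicit two points the paper leaves implicit: saturation follows because all commutation coefficients lie in $\Gamma$, and the reversal case is excluded because $\lambda=-1$ would be a nontrivial torsion element of $\Gamma$.
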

\begin{remark}
      Recall that setting $\lambda=q^{-2}$ and $p_{ij}=q$ for all $i>j$ gives the single-parameter quantum matrix algebra $\mathcal O_q(M_n(\kk))$. 
      Under the hypotheses of Corollary~\ref{cor:full}, the transposition case occurs only in the single-parameter family. Indeed, suppose $\lambda p_{ij}^2=1$ for all $i>j$, and put $q=p_{21}$. Then
      \[
            (p_{ij}q^{-1})^2=1,\quad i>j.
      \]
      Since $p_{ij}q^{-1}\in\Gamma$ and $\Gamma$ is torsion-free, we have $p_{ij}=q$ for all $i>j$. Thus $\lambda=q^{-2}$ and $A=\mathcal O_q(M_n(\kk))$.
\end{remark}

\section{Specializations and examples}\label{sec:specializations}

When $q$ is not a root of unity, the following graded classification follows from the known classification of the full automorphism group \cite{AC1992,Yakimov2013}. Theorem~\ref{thm:main} also gives this description when $q$ is a root of unity of order greater than two.

\begin{corollary}\label{cor:single-parameter}
	Let $q\in\kk^*$ with $q^2\ne1$. Then
	\[
		\Autgr(\mathcal O_q(M_n(\kk)))=\HH\rtimes\langle\tau\rangle.
	\]
\end{corollary}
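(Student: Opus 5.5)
This corollary is a direct specialization of Theorem~\ref{thm:main}, so the plan is to translate the single-parameter choice into the parameter conditions of that theorem and decide which case applies. I set $\lambda=q^{-2}$ and $p_{ij}=q$ for all $i>j$ (so $p_{ij}=q^{-1}$ for $i<j$ and $p_{ii}=1$). The standing hypothesis $\lambda\ne1$ of Theorem~\ref{thm:main} is exactly $q^{2}\ne1$, so the theorem applies.

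Next I check the first case of Theorem~\ref{thm:main}. For every $i>j$,
\[
    \lambda p_{ij}^{2}=q^{-2}q^{2}=1,
\]
so the transposition condition holds for all $i>j$. Hence Theorem~\ref{thm:main} gives $\Autgr(A)=\HH\rtimes\langle\tau\rangle$, and Proposition~\ref{prop:symmetries}(1) confirms that $\tau$ is an automorphism.

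For consistency, the second case must fail. This is guaranteed by Proposition~\ref{prop:symmetries}(3), since the two cases are mutually exclusive. Directly: if $\lambda=-1$, then $q^{2}=-1$, and the reversal condition would force $p_{n1}=p_{1n}$, i.e.\ $q=q^{-1}$, so $q^2=1$, a contradiction. I do not expect any real obstacle, because all the substantive work is contained in Theorem~\ref{thm:main}. The only point needing care is the consistency check with the convention $\lambda=q^{-2}$ and $p_{ij}=q$ for $i>j$, which matches the relations \eqref{eq:relations}.
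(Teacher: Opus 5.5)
Your proposal is correct and follows the paper's proof exactly: you specialize to $\lambda=q^{-2}$ and $p_{ij}=q$ for $i>j$, note that $\lambda\ne1$ is equivalent to $q^2\ne1$, verify $\lambda p_{ij}^2=1$ for all $i>j$, and apply Theorem~\ref{thm:main}. Your extra check that the reversal case cannot occur is correct but not needed, since the theorem already states that the two cases are mutually exclusive.
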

\begin{proof}
	The parameters satisfy $\lambda p_{ij}^2=1$ for all $i>j$. Apply Theorem~\ref{thm:main}.
\end{proof}


\begin{example}[Transposition]
	Assume $\operatorname{char}\kk\ne2$, let $q\in\kk^*$ satisfy $q^2\ne1$, and take $n=3$, $\lambda=q^{-2}$ and
	\[
		\mathbf p=
		\begin{pmatrix}
			1&q^{-1}&-q^{-1}\\
			q&1&q^{-1}\\
			-q&q&1
		\end{pmatrix}.
	\]
	Then $\lambda p_{ij}^2=1$ for all $i>j$, so Theorem~\ref{thm:main} gives $\Autgr(A)=\HH\rtimes\langle\tau\rangle$. 
      The lower-triangular parameters are not all equal, and the group they generate contains $-1$. 
\end{example}

\begin{example}[Reversal]
      Assume $\operatorname{char}\kk\ne2$, let $\lambda=-1$, and choose $u\in\kk^*$ and $\varepsilon\in\{1,-1\}$. Set
	\[
		\mathbf p=
		\begin{pmatrix}
			1&u^{-1}&\varepsilon\\
			u&1&u\\
			\varepsilon&u^{-1}&1
		\end{pmatrix}.
	\]
	Then $p_{ij}=p_{4-i,4-j}$ for all $i,j$, while $\lambda p_{31}^2=-1\ne1$. Thus $\Autgr(A)=\HH\rtimes\langle\sigma\rangle$. The involution fixes $x_{22}$ and interchanges opposite positions.
\end{example}

\begin{remark}[The specialization $\lambda=1$]\label{rem:affine}
    When $\lambda=1$, $A$ is the quantum affine space $\mathcal O_{\mathbf Q}(\kk^{n^2})$ with
    \[
            \mathbf Q=\mathbf p^{\mathsf T}\otimes\mathbf p,\quad Q_{(i,j),(l,m)}=p_{li}p_{jm},
    \]
    with the convention $x_{lm}x_{ij}=Q_{(i,j),(l,m)}x_{ij}x_{lm}$. 
    Its graded automorphism group is given by the known classification \cite[Theorem~4.5]{Jin2025}; see also \cite{JS2026}. 
    In particular, if all $p_{ij}=1$, then $A$ is a polynomial algebra in $n^2$ variables and $\Autgr(A)=\GL_{n^2}(\kk)$.
\end{remark}

\section*{Declaration of generative AI and AI-assisted technologies in the manuscript preparation process}
Statement: 
During the preparation of this work, the author used OpenAI's ChatGPT to assist with language editing, manuscript organization, reference checking, and LaTeX formatting.
After using this tool, the author reviewed and edited the content as needed and takes full responsibility for the content of the published article.

\end{document}